

\documentclass[preprint,nopreprintline,12pt]{elsarticle}





\usepackage{lineno}
\usepackage{babel}
\usepackage{ulem}


\journal{J. Comput. Phys.}

\usepackage{amsmath,amssymb,amsthm,mathtools}
\usepackage{dsfont}
\usepackage{todonotes}
\usepackage{ifthen}
\usepackage{bm}
\usepackage{xparse}
\usepackage{xspace}
\usepackage{xstring}
\usepackage{natbib}

\usepackage{tikz}
\usepackage{tikzscale}
\usepackage{pgfplots}
\pgfplotsset{compat=newest}
\usepackage{pgfplotstable}
\usepgfplotslibrary{groupplots,dateplot}

\usepackage{hyperref}
\hypersetup{
	colorlinks,
	linkcolor={red!50!black},
	citecolor={blue!50!black},
	urlcolor={blue!80!black}
}
\usepackage[nameinlink]{cleveref}
\usepackage{autonum}
\usepackage{enumitem}
\setenumerate{label=\upshape(\alph*),itemsep=3pt,topsep=3pt}

\usepackage{KITcolors}

\newcommand{\GWP}{Gaussian wave packet\xspace}
\newcommand{\GWPs}{Gaussian wave packets\xspace}

\newcommand{\Schroed}{Schr{\"o}\-din\-ger\xspace}
\newcommand{\secondorder}{second-order\xspace}
\newcommand{\semiclassical}{semiclassical\xspace}
\newcommand{\timedependent}{time-dependent\xspace}

\newcommand{\divfree}{divergence-free\xspace}
\newcommand{\wavepacket}{wave packet\xspace}
\newcommand{\timestep}{time step\xspace}

\newcommand{\EulerLagrange}{Euler-Lagrange\xspace}
\newcommand{\electroMagnetic}{electro-magnetic\xspace}

\newcommand{\tr}{\mathrm{tr}\:}

\renewcommand{\i}{\mathrm{i}}

\newcommand{\bbR}{\mathbb{R}}
\newcommand{\Rd}{\bbR^\dim}

\newcommand{\bbC}{\mathbb{C}}
\newcommand{\da}{\coloneqq}

\newcommand{\Id}{\mathrm{Id}}
\newcommand{\sol}{\psi}
\newcommand{\vsol}[1][]{{u^{#1}}}

\newcommand{\pos}[1][]{{q^{#1}}}
\newcommand{\mom}[1][]{{p^{#1}}}
\newcommand{\wm}[1][]{{\mathcal{C}^{#1}}}
\newcommand{\fcP}[1][]{{P^{#1}}}

\newcommand{\fcQ}[1][]{{Q^{#1}}}
\newcommand{\pha}[1][]{\zeta^{#1}}

\newcommand{\vel}[1][]{v^{#1}}

\newcommand{\fcVel}[1][]{\Upsilon^{#1}}
\newcommand{\fcVelExtr}[1][]{\Upsilon^{#1}}

\newcommand{\RC}[1][]{{{\wm}^{#1}_{\textup{R}}}}

\newcommand{\IC}[1][]{{{\wm}_\textup{I}^{#1}}}
\newcommand{\ICinv}{{\wm_\textup{I}^{-1}}}

\newcommand{\ptd}{\delta}

\newcommand{\ReC}{{\wm_\textup{R}}}
\newcommand{\ImC}{{\wm_\textup{I}}}
\newcommand{\ImCinv}{{{\wm}_\textup{I}^{-1}}}

\newcommand{\RePha}[1][]{\pha[#1]_{\textup{R}}}
\newcommand{\ImPha}[1][]{\pha[#1]_{\textup{I}}}

\newcommand{\somePot}{W}
\newcommand{\somePots}{w}
\newcommand{\mgPot}[1][]{{A^{#1}}}
\newcommand{\elField}[1][]{{{E}^{#1}}}
\newcommand{\mgField}[1][]{{{B}^{#1}}}
\newcommand{\mgFieldStrength}{B_0}
\newcommand{\effmgFieldStrength}{B_m}

\newcommand{\mltPot}{\phi}

\newcommand{\magnetFreq}{\omega_{-}}
\newcommand{\corrCycloFreq}{\omega_{+}}
\newcommand{\magnAndCycloFreq}{\omega_{\pm}}
\newcommand{\axialFreq}{\omega_3}
\newcommand{\cycloFreq}{\omega_c}

\newcommand{\magnetFreqPi}{\nu_{-}}
\newcommand{\corrCycloFreqPi}{\nu_{+}}

\newcommand{\axialFreqPi}{\nu_3}

\newcommand{\charge}{q_e}

\newcommand{\someMatrix}{M}

\newcommand{\auxPot}[1][]{{E^{#1}}}
\newcommand{\auxauxPot}[1][]{{S^{#1}}}

\NewDocumentCommand{\mean}{O{} m}{#1\langle #2 #1\rangle_{\vsol}}



\newcommand{\Mf}{\mathcal{M}}

\newcommand{\obs}{\mathbf{A}}

\newcommand{\op}{\mathrm{op}_\mathrm{Weyl}}
\newcommand{\re}{\mathrm{Re}\,}
\newcommand{\im}{\mathrm{Im}\,}

\newcommand{\Ham}{H}

\newcommand{\scp}{\varepsilon}
\newcommand{\cham}{h}

\renewcommand{\dim}{d}

\newcommand{\ii}{\mathrm{i}}

\newcommand{\Div}{\mathrm{div}}

\newcommand{\curl}{\nabla\times}
\newcommand{\sign}{\operatorname{sign}}

\newcommand{\diff}{\mathop{}\!\mathrm{d}}

\newcommand{\dx}{\diff x}

\newcommand{\intRd}{\int_{\Rd}}

\newcommand{\pt}{\partial_{t}}

\newcommand{\jacobian}[1]{J_{#1}}

\newcommand{\abs}[1]{\left\lvert #1 \right\rvert}

\newcommand{\Lin}[2][]{\mathcal{L}(\ifthenelse{\equal{#1}{}}{#2}{#1,#2})}

\newcommand{\tn}[1][n]{t^{#1}}

\newtheorem{theorem}{Theorem}[section]
\newtheorem{lemma}[theorem]{Lemma}

\newtheorem{prop}[theorem]{Proposition}

\newtheoremstyle{break}
  {\topsep}{\topsep}%
  {\itshape}{}%
  {\bfseries}{}%
  {\newline}{}%
\theoremstyle{break}
\newtheorem{algo}[theorem]{Algorithm}

\theoremstyle{definition}

\theoremstyle{remark}
\newtheorem{remark}[theorem]{Remark}

\numberwithin{equation}{section}

\newcommand{\norm}[2]{\left\lVert #2 \right\rVert_{#1}}

\newcommand{\normLp}[2]{\norm{L^{#1}}{#2}}

\newcommand{\normLtwo}[1]{\normLp{2}{#1}}

\newcommand{\Ltwo}{L^2}

\sloppy

\begin{document}

\begin{frontmatter}



\title{Time-integration of Gaussian variational approximation for the magnetic Schr\"odinger equation}

\author[1]{Malik Scheifinger\corref{cor1}} 
\ead{malik.scheifinger@kit.edu}

\author[2,3]{Kurt Busch} 
\ead{kurt.busch@physik.hu-berlin.de}

\author[1]{Marlis Hochbruck} 
\ead{marlis.hochbruck@kit.edu}

\author[4]{Caroline Lasser} 
\ead{classer@tum.de}

\cortext[cor1]{Corresponding author}

\affiliation[1]{%
            organization = {Institute for Applied and Numerical Mathematics, Karlsruhe Institute of Technology},
            addressline={Englerstr.~2}, 
            postcode={76131},
            postcodesep={},
            city={Karlsruhe},  
            country={Germany}}

\affiliation[2]{%
            organization = {Institut f\"ur Physik, Humboldt Universit\"at zu Berlin},
            addressline={Newtonstr.~15}, 
            postcode={12489},
            postcodesep={},
            city={Berlin},  
            country={Germany}}   
            
\affiliation[3]{%
            organization = {Max-Born-Institut},
            addressline={Max-Born-Str.~2A}, 
            postcode={12489},
            postcodesep={},
            city={Berlin},  
            country={Germany}}   
            
\affiliation[4]{%
            organization = {Department of Mathematics, Technische Universit\"at M\"unchen},
            addressline={Boltzmannstr.~3}, 
            postcode={85748},
            postcodesep={},
            city={Garching b.~M\"unchen},  
            country={Germany}} 

\begin{abstract}
In the present paper we consider the semiclassical magnetic 
\Schroed
equation, which describes the dynamics of charged particles under the influence of a electro-magnetic field. The solution of the
 \timedependent 
Schr{\"o}dinger equation is approximated by a 
single Gaussian wave packet via the time-dependent Dirac--Frenkel variational principle.
For the approximation we use ordinary differential equations of motion for the parameters of the variational solution and extend the 
\secondorder 
Boris algorithm for classical mechanics to the quantum mechanical case. In addition, we propose a modified version of the classical fourth order Runge--Kutta method. Numerical experiments explore parameter convergence and geometric properties. Moreover, we benchmark against 
the analytical solution of the Penning trap.  
\end{abstract}




\begin{keyword}
	\GWPs \sep \semiclassical magnetic Schrödinger equation \sep time-dependent variational approximation \sep mesh-free method \sep Boris algorithm \sep Runge--Kutta \sep Penning trap



\end{keyword}

\end{frontmatter}


\section{Introduction}
\label{sec:intro}


In the present paper we study the numerical time-integration for 
charged quantum particles that are
subjected to external magnetic and electric fields. The dynamics is governed by  
the \semiclassical magnetic Schrödinger equation 
\begin{subequations} \label{eq:sproblem-all}
	\begin{equation+} \label{eq:sproblem}
		\ii\scp\pt \sol(t) =\Ham(t)\sol(t),\quad \sol(0) = \sol_0, \quad t\in \bbR,
	\end{equation+}
	on $\bbR^\dim$ with magnetic Hamiltonian 
	\begin{equation+}\label{eq:ham}
		\Ham(t)=  \frac12 \bigl(-\ii\scp \nabla_x -
        \mgPot(t,\cdot)\bigr)^2 
        + 
        \mltPot(t,\cdot),
	\end{equation+}
\end{subequations}
and initial value $\sol_0 \in L^2(\bbR^\dim)$ with \semiclassical parameter $0< \scp \ll 1$. Here, $\mgPot$ is a \divfree magnetic vector potential, and $\mltPot$ is the electric potential. 
%
From a numerical point of view, solving this time-dependent partial differential equation raises three major problems. First, it is a high-dimensional problem, since the space dimension is typically given by $\dim = 3N$, where 
$N$ is the number of quantum particles in the system. Further, the computational domain~$\bbR^\dim$ is naturally  unbounded, and thus most numerical methods require truncation before discretization. For the method of lines (first discretize space, then time), high dimension combined with an unbounded domain leads to inadequately if not intractably large systems that have to be integrated in time. 
Another challenge is given by the high oscillations induced by the small \semiclassical parameter~$\scp$. For standard time integration schemes severe step-size restrictions have to be imposed and leave these methods impracticable. 

We consider the case that the initial condition $\sol_0$ is strongly localized and given by a Gaussian \wavepacket, and investigate the numerical time-integration of an approximate solution $\vsol(t)\approx\sol(t)$, which is a \GWP 
\[
\vsol(t,x) = \exp{\Bigl(\frac{\ii}{\scp}\bigl(\frac12(x-\pos_t)^\top \wm_t (x-\pos_t)+(x-\pos_t)^\top \mom_t +\pha_t\bigr)\Bigr)}.
\]
The parameters to be computed are the packet's position and momentum center $\pos_t,\mom_t$, the complex width matrix $\wm_t$ of the envelope, and the complex phase and weight parameter $\pha_t$. These parameters evolve according to ordinary differential equations, which are systematically derived by the Dirac--Frenkel time-dependent variational principle. 
By the approximation ansatz, high oscillations in time and space are captured and thus eliminated for the numerical time integration.  
For $\mgPot = 0$ it is well established that variational \GWPs offer reasonable mesh-free approximations at low 
computational cost, see for example \cite{LasL20,Van23}. More recently, they have also been proposed for magnetic quantum dynamics \cite{BurDHL24}.

\subsection{Contributions of the paper}
As our main contribution we derive two fast algorithms to solve the equations of motion for the parameters of a variational \GWP approximation such that norm and energy conservation of the quantum solution are echoed by the time integrator. 
On a standard laptop with a non-optimized Jupyter Notebook\footnote{Codes available at
\url{https://gitlab.kit.edu/malik.scheifinger/magnschroedti}.}, these algorithms enable us to compute approximations within minutes. 
First, we suggest an extension of the Boris algorithm \cite{Boris70, Bir18}, which is a standard integrator for the classical equations of motion for a charged particle system in plasma physics, to the quantum mechanical setting. We furthermore modify the well-known Runge--Kutta 4 method, such that it conserves the \(L^2\)-norm of a \GWP at every \timestep. Numerical experiments in two and three space dimensions underline the efficiency and expected 
accuracy of the proposed algorithms.

%
%

\subsection{Outline of the paper}
The paper is structured as follows. In \Cref{subsec:intro-Penning-trap}, we discuss the quantum dynamics of an electron and a proton in a 
hyperbolic Penning trap as a guiding example for a magnetic Schr\"odinger equation in the \semiclassical regime. 
In \Cref{sec:num-approx}, we compare variational and traditional \GWP approximation and review the known error 
estimates.
In \Cref{sec:teom}, we transform the system of ordinary differential equations, that determines the variational parameter evolution, in a form that features averaged magnetic momenta and contains the magnetic vector field $\mgField = \curl\mgPot$ on the right hand side. In particular, we derive an equation for the imaginary part of the phase parameter $\pha$ guaranteeing the preservation of the \(L^2\)-norm. In \Cref{sec:time-int}, we briefly introduce the Boris algorithm used and derive our two algorithms for solving the parameters ODEs for the approximating \GWP. Finally, in \Cref{sec:experiments}, we present numerical experiments for a two-dimensional magnetic system with trigonometric potentials and for the three-dimensional Penning trap. \Cref{appendix} gives formulas for the magnetic energy and other averages of \GWPs.

\subsection{Notation}

%
%
%
For a scalar function $a\colon\bbR^{\dim}\to\bbR$ we denote the Hessian matrix by $\nabla^2 a(x)$ and
for a vector field $A\colon \bbR^\dim\to\bbR^\dim$ we denote the Jacobi matrix by 
$J_A(x) = (\partial_\ell A_k(x))_{k,\ell =1}^\dim$.  
For a function $W \colon \bbR^\dim \to \bbR^L$, $L\geq 1$, and more generally a linear operator $\obs$ acting on $L^2(\bbR^d)$
we define the averages
\begin{align}
	\langle W\rangle_{\vsol} \coloneqq \langle \vsol |W\vsol\rangle, \quad
\langle \obs\rangle_{\vsol} &\coloneqq \bigl\langle \vsol| \obs \vsol \bigr\rangle, 
\end{align}
if the inner products exist. We follow the convention that inner products are anti-linear in the first component. We also use the dot product of vectors  
$v,w\in\bbC^L$ as $v\cdot w \coloneqq v^\top w = v_1w_1 + \cdots + v_L w_L$ and the square $v^2 \coloneqq v\cdot v$. For complex matrices $\wm\in\bbC^{\dim\times\dim}$ we denote the 
component-wise real and imaginary parts by $\RC,\IC\in\bbR^{\dim\times\dim}$, respectively.

\section{Penning trap}\label{subsec:intro-Penning-trap}

To illustrate semiclassical scaling for magnetic quantum dynamics we consider a charged
microscopic particle, e.g., an electron or a proton, in a macroscopic hyperbolic 
Penning trap.
Such a trap consists of an arrangement of magnetic coils and electrodes 
which feature a static magnetic field along the  $x_3$-direction and a quadrupole-like static electric field that is rotationally symmetric 
around the $x_3$-axis, see, e.g., \cite{BroL84}. The corresponding vector and scalar potentials, respectively, read as
\begin{align}
	\label{eq:trap-potentials}
	\mgPot(x) = \frac{1}{2} \mgFieldStrength
       \begin{pmatrix}
          -x_2 \\
           x_1 \\
           0
       \end{pmatrix}
	\quad \text{and} \quad
	\mltPot(x) = \frac{ \mltPot_0}{2\ptd^2} 
	    \left( x_3^2 - \frac{1}{2} \left( x_1^2 + x_2^2 \right) \right),
\end{align} 
see \Cref{tab:trap-frequencies} for typical trap parameters $\mgFieldStrength, \mltPot_0, \ptd$ for protons and electrons.
A classical point particle (mass $m$, charge $\charge$) moving in such an  
electromagnetic field configuration executes an oscillatory motion 
along the $x_3$-axis (angular frequency~$\axialFreq$), while simultaneously executing an epicyclic motion in the $x_1x_2$-plane where the low-frequency magnetron orbit (magnetron frequency~$\magnetFreq$) is overlayed with 
high-frequency cyclotron orbits (angular frequency~$\corrCycloFreq$), cf.\ \Cref{fig:penning-trap-projected-exact}.
In terms of the particle and trap parameters, these frequencies are 
given by
\begin{align}
	\magnAndCycloFreq = \frac{1}{2} \left( \cycloFreq \pm \Omega \right), 
    \quad
	\axialFreq = \sqrt{\frac{\vert \charge \vert \mltPot_0}{m\ptd^2}},
	\quad \text{where} \quad 
	\cycloFreq = \frac{\vert \charge \vert \mgFieldStrength}{m}, \quad
    \Omega = \sqrt{\cycloFreq^2 - 2 \axialFreq^2}.
\end{align}
The associated classical trajectories $x_c(t)$
may be obtained via Newtonian~\cite{BroL84} or Hamiltonian~\cite{Kre2014} 
approaches. 

\begin{table}
	\begin{center}
    \renewcommand{\arraystretch}{1.3}
    \small{
		\begin{tabular}{|l|r|r|}
			\hline  
			\textbf{Quantity} & \textbf{Electron} & \textbf{Proton} \\
			\hline 
		$\ptd$   (Trap Size)           &      0.00335 m    &       0.00112 m \\
		$\mgFieldStrength$   (Magnetic Field)      &        5.872 T    &         5.050 T \\
		$ \mltPot_0$ (Electrode Potential) &        10.22 V    &         53.10 V \\          
			$\corrCycloFreqPi = \frac{\corrCycloFreq}{2 \pi}$ (Corrected Cyclotron Frequency) & 164.38 GHz & 76.299 MHz\\
			$\axialFreqPi = \frac{\axialFreq}{2 \pi}$ (Axial Frequency)               & 63.698 MHz & 10.134 MHz \\
			$\magnetFreqPi = \frac{\magnetFreq}{2 \pi}$ (Magnetron Frequency)           & 12.341 kHz & 672.93 kHz \\
			\hline
		\end{tabular}
        }
	\end{center}
	\caption{\label{tab:trap-frequencies}
		     Typical frequencies for the undulatory motion of electrons and
		     protons in a Penning trap, cf.\ \cite[Tables~I and II]{BroL84}.}
\end{table}

\begin{figure}[!htb]
    \centering
    \includegraphics[width=.7\linewidth]{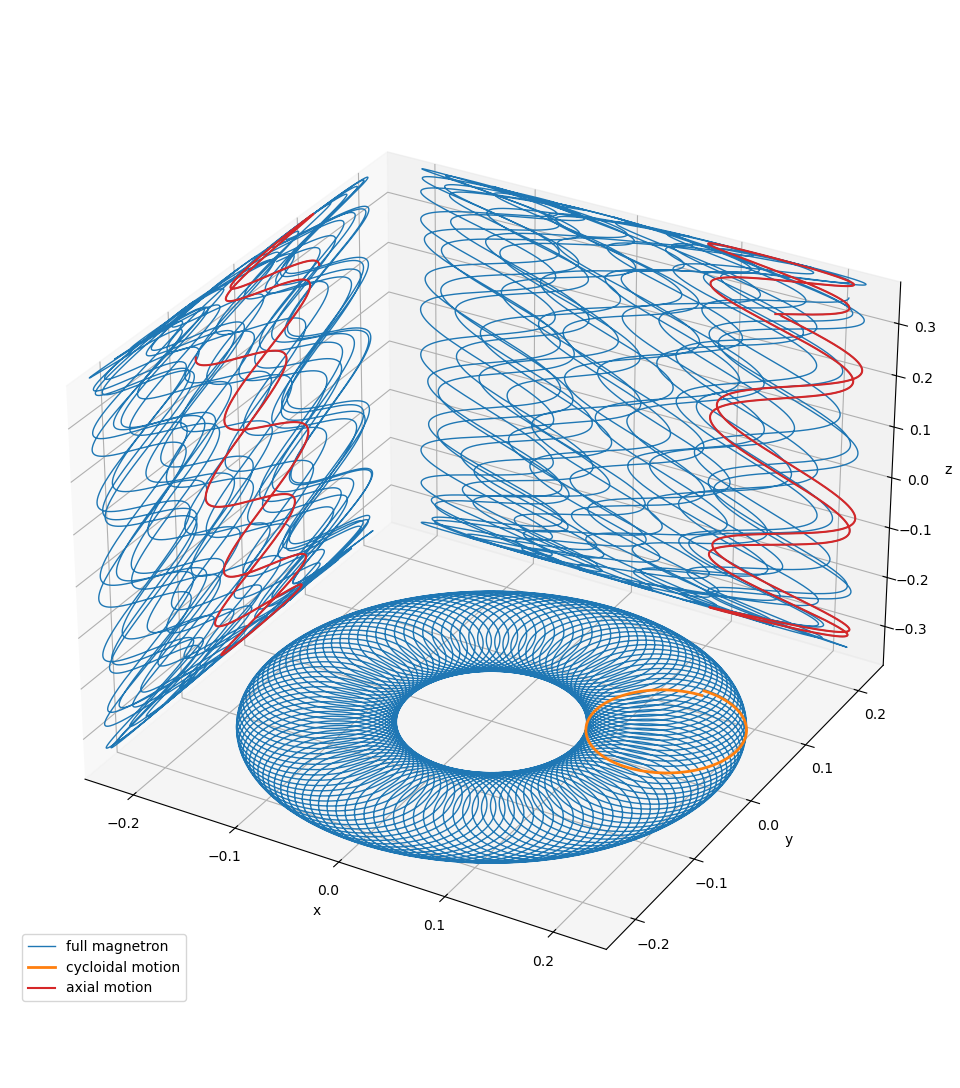}
    \caption{Exact trajectory of a proton in a hyperbolic Penning trap with data from \Cref{tab:trap-frequencies} and initial condition specified in \Cref{subsec:Penning-trap} on the dimensionless time interval \([0,2\pi]\).}
    \label{fig:penning-trap-projected-exact}
\end{figure}
The quantum dynamics of a trapped particle is governed by the 
time-dependent Schr\"{o}dinger equation which, in SI units, reads
\begin{align}
	\label{eq:TDSE-EM}
	\ii \hbar \, \partial_t \sol (t,x) 
	=
	\Bigl(
	\frac{1}{2m}
	\bigl(  -\ii \hbar\nabla_x - \charge \mgPot(x) \bigr)^2 +  \charge \mltPot(x)
	\Bigr) \sol(t,x).
\end{align}
%
A first comparison of the macroscopic spatial extent of the trap, described by the trap parameter $\ptd \approx 1$mm,  with the wavelength of the trapped quantum particle (e.g., a proton) corresponding to the corrected cyclotron frequency $\corrCycloFreqPi \approx 76$MHz indicates already that mesh-based approaches to \eqref{eq:TDSE-EM} are rather impracticable.
Upon transiting to dimensionless coordinates
$x \to x/\ptd$ 
and introducing the dimensionless time 
$t \to \magnetFreq t$,
equation \eqref{eq:TDSE-EM} becomes
\begin{align}
	\label{eq:TDSE-no-units}
	\ii \scp \partial_{t} \sol (t,x)
	=
	\left(
	\frac{1}{2} \left( \ii \scp {\nabla} 
	+  \mgPot_m(x)
	\right)^2 
	+
    \sign(\charge)
	\frac{\corrCycloFreq}{\magnetFreq} 
	\Bigl( 
        x_3^2 - \frac{1}{2} 
        \left( 
            x_1^2 + x_2^2 
	    \right) 
	\Bigr) 
	\right) \sol (t,x). 
\end{align}
This is of the form \eqref{eq:sproblem-all}, since 
with the effective magnetron magnetic field 
$\effmgFieldStrength = m \magnetFreq/  \charge $ we obtain the scaled 
dimensionless quantities
\begin{align}
& \scp = \hbar/(\charge \effmgFieldStrength \ptd^2) \approx 1.19\cdot 10^{-8},
\quad {\corrCycloFreq}/{\magnetFreq} \approx 113.25,\\
&	\mgPot_m (x)  = \frac{1}{2} \frac{\mgFieldStrength}{\effmgFieldStrength} 
	\begin{pmatrix}
		- x_2 \\
		x_1 \\
		0
	\end{pmatrix},
    \quad  \frac{\mgFieldStrength}{\effmgFieldStrength}\approx 114.25.
\end{align}
%
Since the semiclassical parameter \(\scp \approx 1.19\cdot 10^{-8}\) is very small, the dynamics of the wave function are highly oscillatory in space and time, motivating the \GWP ansatz~\eqref{eq:gwp} for eliminating high oscillations. 
In \Cref{fig:penning-trap-projected-exact}, we illustrate the corresponding proton dynamics for the typical trap parameters of \Cref{tab:trap-frequencies}. The parameters of the initial \GWP $\sol(0,x)$ are specified in \eqref{eq:penning-IV}.  
The plot depicts the exact trajectory of the center in blue. One cycle of the cycloidal motion is highlighted in orange and one of the axial motion in red.

\section{Variational approximation}\label{sec:num-approx}

In the semiclassical regime, the solution of the Schr\"odinger equation \eqref{eq:sproblem} is highly oscillatory and well localized. We thus seek approximations within the manifold of complex Gaussian wave packets 
\begin{align} 
	\Mf = \Bigl\{ 
	&\vsol \in L^2(\bbR^\dim)  \bigm|
	\vsol(x) = \exp{\Bigl(\frac{\ii}{\scp}\bigl(\frac12(x-\pos)^\top \wm (x-\pos)+(x-\pos)^\top \mom +\pha\bigr)\Bigr) } , \nonumber
	\\
	&\pos,\,\mom\in \bbR^\dim, \,
	\wm = \wm[\top] \in \bbC^{\dim\times \dim}, \,
	\im \wm 
	\text{ positive definite}, \pha\in \bbC
	\Bigr\} . \label{eq:gwp}
\end{align}
We construct the optimal approximation $\vsol(t)\approx\sol(t)$, $\vsol(t)\in\Mf$, in the sense that the time-derivative $\pt\vsol(t)$ minimizes the residual,
\begin{equation}\label{eq:min}
    \normLtwo{\ii\scp \pt \vsol(t) - \Ham(t)\vsol(t)} = \min_{\pt\vsol(t)} !
\end{equation}
We consider initial data with
$\sol_0 = \vsol_0\in \Mf$ and $\normLtwo{\vsol_0} =1$,  
and mention in passing, that the variational approximation \eqref{eq:min} is norm preserving in any case and  energy preserving for time-independent Hamiltonians,
\begin{align}
\normLtwo{\vsol(t)} = \normLtwo{\vsol_0},\quad
\langle H\rangle_{\vsol(t)} = 
\langle H\rangle_{\vsol_0}\quad \text{for all}\ t,
\end{align}
 see \cite[\S II.1.5]{Lub08_book}. 

\subsection{Variational equations of motion}\label{subsec:eqmo}
In earlier work \cite{BurDHL24}, we  derived ordinary differential equations for the parameters $\pos_t$, $\mom_t$, $\wm_t$, $\pha_t$ of the approximating \GWP, which we reproduce here. We denote the classical Hamiltonian function for charged particles in a magnetic field by
\[
\cham(t,x,\xi) = \frac12\left(\xi - \mgPot(t,x)\right)^2 + \mltPot(t,x),\quad (t,x,\xi)\in\bbR\times\bbR^\dim\times\bbR^\dim.
\]
Then, the residual minimization in \eqref{eq:min} implies that
\begin{subequations} \label{eq:eom:var-all}
\begin{align+}\label{eom:var}
    \dot\pos &= \langle \partial_\xi \cham\rangle_{\vsol},\quad
    \dot\mom = -\langle \partial_x \cham\rangle_{\vsol},\quad
    \dot\wm = -\mathcal B(\wm),\\
    \dot\pha &= -\langle \cham\rangle_{\vsol} + 
    \frac{\scp}{4}\tr\!\left(\mathcal B(\wm)\ICinv\right) + p^\top\langle \partial_\xi \cham\rangle_{\vsol},
\end{align+}
where the complex matrix $\mathcal B(\wm)\in\bbC^{\dim\times\dim}$, that depends on a Gaussian average, is given by
\begin{equation}  \label{eq:BC-def}
    \mathcal B(\wm) = \begin{pmatrix}\Id & \wm\end{pmatrix} 
 \langle \nabla^2 \cham\rangle_{\vsol} \begin{pmatrix}\Id\\ \wm\end{pmatrix} 
 .
\end{equation}
\end{subequations}
The averages 
$\langle \partial^\alpha h\rangle_{\vsol} = \langle \op(\partial^\alpha h)\rangle_{\vsol}$ use the standard Weyl quantization of the derivatives of the Hamiltonian function.
In \Cref{sec:teom}, we reformulate these averages such that the equations of motion become amenable to a Boris-type time discretization.

\subsection{Asymptotic accuracy}\label{sec:accuracy}
The variational \GWP $\vsol(t)$ determined by \eqref{eq:min} is the exact Schr\"odinger solution $\sol(t)$, if the magnetic potential $\mgPot(t,\cdot)$ is 
linear and the electric potential $\mltPot(t,\cdot)$ quadratic with respect to position. In particular, the dynamics in a Penning trap are exactly described. More generally, if $\mgPot$ and $\mltPot$ are sub-linear and sub-quadratic in the following sense,
\begin{align}
    &\partial^\alpha_{x} \mgPot(t,\cdot), \quad \partial^{\beta}_{x} \mltPot(t,\cdot) 
    \qquad \text{bounded for all}\ |\alpha|\ge 1,\ |\beta|\ge 2,
\end{align}
then the following bounds for the norm and the observable error can be proven, see \cite[Theorems 3.8 \& 3.10]{BurDHL24}. Given a finite time horizon $[0,T]$,  the norm error and the observable 
error for expectation values with respect to a linear operator $\obs$ satisfy
\begin{align+}\label{eq:error_all}
&\normLtwo{\sol(t)-\vsol(t)} \le c\, t\,\sqrt\scp,\qquad
\left|\langle \obs\rangle_{\sol(t)} - \langle \obs\rangle_{\vsol(t)}\right| \le C\, t\, \scp^2
\end{align+}
for all $t\in[0,T]$, 
where the constants $c,C>0$ are independent from $\scp$ and $t$, but depend on a lower bound for the eigenvalues of $\IC$ on $[0,T]$. It is worth pointing out the high observable accuracy.

\subsection{Classical versus variational approximation}
Traditional  \GWP approximation evolves the centers 
according to the purely classical equations of motion
\[
\dot q = \partial_p h,\qquad \dot p = - \partial_q h,
\]
see \cite[\S4]{RobC21_book}. If the magnetic and the electric potential are linear respectively quadratic in position (as they are for a Penning trap), then the classical and the variational approximation coincide and yield the exact quantum solution. For general Hamiltonians, however, the approximations differ. The variational equations of motion \eqref{eq:eom:var-all} contain averages with respect to the approximating \GWP, which are computationally more costly than the pure point evaluations of the traditional approach. However, a traditional Gaussian approximation is neither energy preserving nor as accurate as a variational one, since the classical observable error is only first order in $\scp$, while the variational one is second order.

\subsection{Hagedorn parametrization} \label{subsec:hagedorn}
It is convenient to write the complex symmetric width matrix $\wm$ in Hagedorn's parametrisation as 
\begin{equation+} \label{eq:Hag_fac}
	\wm =\RC + \ii \IC = \fcP \fcQ^{-1} \quad \mathrm{and}\quad \IC = (\fcQ\fcQ^*)^{-1},
\end{equation+}
with two real symmetric matrices $\RC,\IC$ and two invertible complex  matrices $\fcQ,\fcP$ that satisfy the symplecticity condition $\fcQ^\top\fcP - \fcP^\top\fcQ = 0$, $\fcQ^*\fcP - \fcP^*\fcQ = 2\ii\mathrm{Id}$. Such a decomposition of complex symmetric matrices with positive definite imaginary part is unique up to unitary factors, see \cite[Chapter V]{Lub08_book}. The Riccati-type equation \eqref{eom:var} for the evolution of the width matrix $\wm$ takes in Hagedorn's parametrisation the form
\[
\dot \fcQ = \mean{\partial_{pq}h}Q + \mean{\partial_{pp}h}P,\quad
\dot \fcP = -\mean{\partial_{qq}h}Q - \mean{\partial_{qp}h}P.
\]
Unfortunately, we can express $\RC = \frac12 (\fcP \fcQ^{-1} + \fcQ^{-*} \fcP^*)$ only in terms of both $\fcQ$ and $\fcP$, which will have implications for the integrators to be developed.

\section{Transformation of equations of motion}
\label{sec:teom}

We transform the variational equations of motion  in such a way that they structurally mimic the classical equations 
for a charged particle. 

\subsection{Averaged magnetic momenta}
In a first step, we rewrite the variational equations of motion such that an averaged version of the usual magnetic momenta becomes visible. 

\begin{lemma} \label{lem:var_equations}
The variational equations of motion \eqref{eq:eom:var-all} for the parameters of a Gaussian wave packet are equivalent to
\begin{subequations} \label{eq:ham-all}
 \begin{align+}
    \dot{\pos} &=  \mom - \mean{\mgPot}, \quad 
    \dot{\mom} =  \mean{J_{\mgPot}^\top (\xi-\mgPot) - \nabla\mltPot}, 
    \label{eq:ham-qp} \\
    \dot\wm &= -\mean{\partial^2_{x} \cham} + \mean{\jacobian{\mgPot}^\top}\wm + \wm \mean{\jacobian{\mgPot}} - \wm^2,
    \label{eq:eqmo_C-l31}\\
     \dot\pha &= -\langle \cham\rangle_{\vsol} + 
    \frac{\scp}{4}\tr(\mathcal B(\wm)\ICinv) + \mom^\top(\mom-\mean{\mgPot}),
    \label{eq:eqmo_zeta-l31}
\end{align+}  
\end{subequations}
where $\mean{a} = \mean{\op(a)}$ for any smooth  $a:\bbR^{2\dim}\to\bbR$, $(x,\xi)\mapsto a(x,\xi)$. In Hagedorn's parametrisation, the matrix evolution \eqref{eq:eqmo_C-l31} satisfies
\begin{subequations}\label{eq:ham-Q-P}
\begin{align+}
\dot \fcQ 
&= 
\fcP -\mean{\jacobian{\mgPot}} \fcQ, \label{eq:ham-Q} \\
\dot \fcP 
&= 
\mean{\jacobian{\mgPot}^\top}\fcP-\mean[\big]{\jacobian{\mgPot}^\top\jacobian{\mgPot} - 
\sum_{k=1}^\dim \nabla^2 \mgPot_k(\xi_k-\mgPot_k) 
+ 
\nabla^2\mltPot}\fcQ. \label{eq:ham-P}
\end{align+}
\end{subequations}
\end{lemma}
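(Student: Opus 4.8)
The plan is to read Lemma~\ref{lem:var_equations} as a bookkeeping identity: the four groups of equations in \eqref{eq:ham-all}--\eqref{eq:ham-Q-P} all arise from \eqref{eq:eom:var-all} by inserting the explicit first and second derivatives of the classical magnetic Hamiltonian $\cham(t,x,\xi)=\tfrac12(\xi-\mgPot(t,x))^2+\mltPot(t,x)$ and invoking only elementary properties of the Weyl quantization of a normalized Gaussian. There is essentially no analytic content; the work lies in keeping transpose and sign conventions straight.

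First I would record the derivatives. Writing $\jacobian{\mgPot}$ for the Jacobian of $\mgPot(t,\cdot)$, one has $\partial_\xi\cham=\xi-\mgPot$, $\partial_x\cham=-\jacobian{\mgPot}^\top(\xi-\mgPot)+\nabla\mltPot$, and the $2\dim\times 2\dim$ Hessian has blocks $\partial_{\xi\xi}\cham=\Id$, $\partial_{\xi x}\cham=-\jacobian{\mgPot}$, $\partial_{x\xi}\cham=-\jacobian{\mgPot}^\top$, and $\partial_{xx}\cham=\jacobian{\mgPot}^\top\jacobian{\mgPot}-\sum_{k=1}^{\dim}\nabla^2\mgPot_k\,(\xi_k-\mgPot_k)+\nabla^2\mltPot$. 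Using the convention $\mean{\partial^\alpha\cham}=\mean{\op(\partial^\alpha\cham)}$, linearity of $\op$ and of $\mean{\cdot}$, the normalization $\normLtwo{\vsol}=1$ (so $\mean{\op(1)}=1$), and the elementary fact that the momentum center of $\vsol\in\Mf$ gives $\mean{\op(\xi)}=\mom$, I obtain $\mean{\partial_\xi\cham}=\mom-\mean{\mgPot}$ and $\mean{\partial_x\cham}=\mean{\jacobian{\mgPot}^\top(\xi-\mgPot)}-\mean{\nabla\mltPot}$; substituting into the first line of \eqref{eq:eom:var-all} produces \eqref{eq:ham-qp}. The phase equation \eqref{eq:eqmo_zeta-l31} then follows verbatim from \eqref{eom:var} once $\mean{\partial_\xi\cham}$ is replaced by $\mom-\mean{\mgPot}$ in the last term.

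Next I would treat the width matrix. Expanding the $2\times 2$ block product defining $\mathcal B(\wm)$ in \eqref{eq:BC-def} and pulling the parameter $\wm$ out of the averages gives $\mathcal B(\wm)=\mean{\partial_{xx}\cham}+\wm\,\mean{\partial_{\xi x}\cham}+\mean{\partial_{x\xi}\cham}\,\wm+\wm\,\mean{\partial_{\xi\xi}\cham}\,\wm$, and inserting the Hessian blocks above turns $\dot\wm=-\mathcal B(\wm)$ into \eqref{eq:eqmo_C-l31}. For the Hagedorn form I would use the translation already recorded in \Cref{subsec:hagedorn}, namely $\dot\fcQ=\mean{\partial_{pq}\cham}\fcQ+\mean{\partial_{pp}\cham}\fcP$ and $\dot\fcP=-\mean{\partial_{qq}\cham}\fcQ-\mean{\partial_{qp}\cham}\fcP$ (equivalently, verify directly that $\wm=\fcP\fcQ^{-1}$ with these ODEs solves the Riccati equation, using $\dot\wm=\dot\fcP\fcQ^{-1}-\wm\,\dot\fcQ\,\fcQ^{-1}$ together with $\partial_{x\xi}\cham=(\partial_{\xi x}\cham)^\top$), and then substitute $\mean{\partial_{pp}\cham}=\Id$, $\mean{\partial_{pq}\cham}=-\mean{\jacobian{\mgPot}}$, $\mean{\partial_{qp}\cham}=-\mean{\jacobian{\mgPot}^\top}$, and $\mean{\partial_{qq}\cham}=\mean{\jacobian{\mgPot}^\top\jacobian{\mgPot}-\sum_{k=1}^{\dim}\nabla^2\mgPot_k\,(\xi_k-\mgPot_k)+\nabla^2\mltPot}$ to reach \eqref{eq:ham-Q-P}. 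Every step is an algebraic identity or a plain substitution, so the two systems are equivalent, which is the claim.

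The computation is routine but unforgiving, so the only real obstacle is careful accounting. The off-diagonal Hessian blocks $\partial_{\xi x}\cham=-\jacobian{\mgPot}$ and $\partial_{x\xi}\cham=-\jacobian{\mgPot}^\top$ must be kept distinct, and the genuinely nonlinear term $-\sum_{k=1}^{\dim}\nabla^2\mgPot_k\,(\xi_k-\mgPot_k)$ produced by differentiating $(\xi-\mgPot)^2$ twice in $x$ has to be carried inside the Gaussian average so that it lands correctly in both \eqref{eq:eqmo_C-l31} and \eqref{eq:ham-P}. It is worth noting that no products of operators ever arise: every symbol entering an average is at most affine in $\xi$, so linearity of $\mean{\cdot}$ suffices and there is no Weyl-ordering subtlety; likewise $\mean{\op(1)}=1$ uses the normalization and $\mean{\op(\xi_j)}=\mom_j$ uses only that $\abs{\vsol}^2$ is a Gaussian centered at $\pos$.
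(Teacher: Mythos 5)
Your proposal is correct and follows essentially the same route as the paper's proof: compute $\partial_\xi\cham$, $\partial_x\cham$ and the Hessian blocks of $\cham$, substitute into \eqref{eq:eom:var-all}, expand $\mathcal B(\wm)$ from \eqref{eq:BC-def}, and read off \eqref{eq:ham-Q-P} from the Hagedorn equations already stated in \Cref{subsec:hagedorn}. The only difference is cosmetic: you spell out the ancillary facts $\mean{\op(1)}=1$ and $\mean{\op(\xi)}=\mom$ and the absence of Weyl-ordering issues, which the paper's terser proof uses implicitly.
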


\begin{proof}  Since the partial derivatives of the classical Hamiltonian function $\cham(x,\xi) = \frac12(\xi-\mgPot(x))^2 + \mltPot(x)$ satisfy
\begin{align}
\partial_\xi \cham(x,\xi) = \xi-\mgPot(x),\qquad
\partial_x \cham(x,\xi) = -\jacobian{\mgPot}(x)^\top(\xi-\mgPot(x)) + \nabla\mltPot(x),
\end{align}
we have \eqref{eq:ham-qp}. We next turn to the second order derivatives of~$\cham$. Denoting the partial derivatives of the magnetic and the electric potential shortly by $\partial_1,\ldots,\partial_\dim$, we have
\begin{align}
\partial_{x_m} \partial_{x_\ell} \cham &= \sum_{k=1}^\dim \left( \partial_m \mgPot_k \partial_\ell \mgPot_k - (\xi_k-\mgPot_k)\partial_m\partial_\ell \mgPot_k\right) + \partial_m\partial_\ell\mltPot,\\
\partial_{x_m} \partial_{\xi_\ell}\cham &= -\partial_m \mgPot_\ell,\qquad
\partial_{\xi_m} \partial_{x_\ell} \cham = -\partial_\ell \mgPot_m,
\end{align}
which gives a Hessian matrix 
\[
\nabla^2 \cham = 
\begin{pmatrix}\partial_{x}^2 \cham & \partial_{x\xi} \cham\\ \partial_{\xi x}\cham & \partial_{\xi}^2\cham\end{pmatrix} =
\begin{pmatrix} \partial_{x}^2\cham & -\jacobian{\mgPot}^\top\\ - \jacobian{\mgPot} & \mathrm{Id}\end{pmatrix}
\]
with
$\partial_{x}^2\cham = \jacobian{\mgPot}^\top\jacobian{\mgPot} - \sum_{k=1}^\dim \nabla^2 \mgPot_k(\xi_k-\mgPot_k) + \nabla^2\mltPot$.
By \eqref{eq:BC-def} we then infer that 
\begin{align}
\mathcal B(\wm) 
&= \mean{\partial^2_{x} \cham} + \mean{\partial_{x\xi}\cham}\wm + \wm \mean{\partial_{\xi x}\cham} + \wm\mean{\partial^2_\xi \cham}\wm\\
&= \mean{\partial^2_{x} \cham} - \mean{\jacobian{\mgPot}^\top}\wm - \wm \mean{\jacobian{\mgPot}} + \wm^2.
\end{align}
Hence, the variational equations of motion \eqref{eq:eom:var-all} are equivalent to  \eqref{eq:ham-all}.
\end{proof}

This formulation of the equations of motion prominently features averaged magnetic momenta 
\begin{equation}  \label{eq:mag-momenta}
\vel \da \mom - \mean{\mgPot}, \qquad 
\fcVel \da \fcP - \mean{ \jacobian{\mgPot}}\fcQ. 
\end{equation}
We aim at rewriting them in terms of the real vector $\vel(t)\in\bbR^\dim$ and the complex matrix $\fcVel(t)\in\bbC^{\dim\times\dim}$. 

\subsection{Equations for the center and the width}
Since the averaged magnetic momenta contain averages of the magnetic field and of its Jacobian, we need a compact way of assessing the time derivative of averages in the spirit of a magnetic Ehrenfest-type theorem.

\begin{prop}\label{prop:Ehrenfest}
For any smooth function \(\somePots\colon\Rd\to\bbR\), its average with respect to the variational \GWP \(\vsol = \vsol(t)\) satisfies
\begin{align}
\frac{d}{dt}\mean{\somePots} 
&= 
\mean{\nabla\somePots}^\top\vel +
\frac\scp2\tr\!\left(\mean{\nabla^2 \somePots} (\fcVel\fcQ^* - \ii\Id) \right),
\end{align}
where $\fcVel\fcQ^* - \ii\Id$ is a real matrix.
\end{prop}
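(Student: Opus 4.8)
The plan is to use that, since $\somePots$ depends on position only, its \Weylquantization $\op(\somePots)$ is multiplication by $\somePots$, so that $\mean{\somePots}=\int_{\Rd}\somePots(x)\,\abs{\vsol(t,x)}^2\dx$ is the expectation of $\somePots$ under the probability density $\abs{\vsol(t,\cdot)}^2$. First I would read off from the ansatz \eqref{eq:gwp} that $\abs{\vsol(t,x)}^2=\exp\!\bigl(-\tfrac1\scp(x-\pos)^\top\IC(x-\pos)-\tfrac2\scp\im\pha\bigr)$, and, after using $\normLtwo{\vsol}=1$ to fix $\im\pha$, identify this as the density of the normal law $\mathcal{N}(\pos_t,\Sigma_t)$ with covariance $\Sigma_t=\tfrac\scp2\ICinv=\tfrac\scp2\fcQ\fcQ^*$, the last equality by the Hagedorn relation $\IC=(\fcQ\fcQ^*)^{-1}$ of \eqref{eq:Hag_fac}. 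In particular the only $t$-dependence of $\abs{\vsol}^2$ sits in $\pos_t$ and in $\Sigma_t$; the parameters $\mom_t$, $\RC$, $\re\pha$ do not enter.

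Second, I would differentiate under the integral. Writing $\Sigma_t=L_tL_t^\top$ with a real matrix $L_t$ and substituting $x=\pos_t+L_t\eta$ gives $\mean{\somePots}=\mathbb{E}\bigl[\somePots(\pos_t+L_t\eta)\bigr]$ for $\eta\sim\mathcal{N}(0,\Id)$, whence
\[
\totalDervt\mean{\somePots}=\mathbb{E}\bigl[\nabla\somePots(\pos_t+L_t\eta)^\top(\dot\pos_t+\dot L_t\eta)\bigr].
\]
The $\dot\pos_t$-part equals $\mean{\nabla\somePots}^\top\dot\pos_t$. For the $\dot L_t\eta$-part I would apply Gaussian integration by parts, $\mathbb{E}[f(\eta)\,\eta_j]=\mathbb{E}[\partial_{\eta_j}f(\eta)]$, which turns it into $\tr\!\bigl(\mean{\nabla^2\somePots}\,L_t\dot L_t^\top\bigr)$; since $\mean{\nabla^2\somePots}$ is symmetric this equals $\tfrac12\tr\!\bigl(\mean{\nabla^2\somePots}\,(L_t\dot L_t^\top+\dot L_tL_t^\top)\bigr)=\tfrac12\tr\!\bigl(\mean{\nabla^2\somePots}\,\dot\Sigma_t\bigr)$, so that
\[
\totalDervt\mean{\somePots}=\mean{\nabla\somePots}^\top\dot\pos_t+\tfrac12\tr\!\bigl(\mean{\nabla^2\somePots}\,\dot\Sigma_t\bigr).
\]

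Third, I would insert the equations of motion from \Cref{lem:var_equations} and simplify. By \eqref{eq:ham-qp} and the definition \eqref{eq:mag-momenta}, $\dot\pos_t=\mom_t-\mean{\mgPot}=\vel$, and by \eqref{eq:ham-Q}, $\dot\fcQ=\fcP-\mean{\jacobian{\mgPot}}\fcQ=\fcVel$. Hence $\dot\Sigma_t=\tfrac\scp2(\dot\fcQ\fcQ^*+\fcQ\dot\fcQ^*)=\tfrac\scp2\bigl(\fcVel\fcQ^*+(\fcVel\fcQ^*)^*\bigr)=\scp\,\re(\fcVel\fcQ^*)$. The last ingredient is the linear-algebra fact that $\fcVel\fcQ^*-\ii\Id$ is real: using $\fcP=\wm\fcQ$ and $\fcQ\fcQ^*=\ICinv$ from \eqref{eq:Hag_fac} one finds $\fcP\fcQ^*=\wm\ICinv=(\RC+\ii\IC)\ICinv=\RC\ICinv+\ii\Id$, so $\fcVel\fcQ^*-\ii\Id=\fcP\fcQ^*-\mean{\jacobian{\mgPot}}\fcQ\fcQ^*-\ii\Id=\RC\ICinv-\mean{\jacobian{\mgPot}}\ICinv$ is real. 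Therefore $\re(\fcVel\fcQ^*)=\fcVel\fcQ^*-\ii\Id$, i.e.\ $\dot\Sigma_t=\scp(\fcVel\fcQ^*-\ii\Id)$, and substituting this into the formula above yields the assertion, including the stated reality of $\fcVel\fcQ^*-\ii\Id$.

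I expect no genuine obstacle; the argument is essentially bookkeeping. The two places that need care are, first, tracking the $\scp$- and $\tfrac12$-conventions so that $\abs{\vsol}^2$ has covariance exactly $\tfrac\scp2\ICinv$ and the Stein step produces precisely $\tfrac12\dot\Sigma_t$, and second, the reality of $\fcVel\fcQ^*-\ii\Id$, which is what brings the a~priori complex matrix $\dot\Sigma_t$ into the real form appearing in the statement. A minor technical point is that interchanging $\totalDervt$ with the Gaussian integral and performing the integration by parts require mild growth control on $\somePots$ and its first two derivatives, which is available under the sub-linear/sub-quadratic assumptions on $\mgPot,\mltPot$ used in the applications to follow.
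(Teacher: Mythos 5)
Your proposal is correct, and it reaches the result by a genuinely different route than the paper. The paper differentiates the position density $\abs{\vsol(t,x)}^2$ directly, inserting the parameter ODEs for $\pos$, $\ImC$ and $\ImPha$ from Lemma~3.1 (the divergence-free condition enters through $\tr\mean{\jacobian{\mgPot}}=0$ to cancel the zeroth-order terms), then converts the resulting Gaussian moments into traces via the calculus Lemma of the appendix, and finally rewrites $\ImCinv\bigl(\ReC-\mean{\jacobian{\mgPot}^\top}\bigr)$ in Hagedorn variables. You instead use norm conservation of the variational flow to read $\abs{\vsol(t,\cdot)}^2$ as the density of $\mathcal N(\pos_t,\Sigma_t)$ with $\Sigma_t=\tfrac{\scp}{2}\fcQ\fcQ^*$, reparametrize by $x=\pos_t+L_t\eta$, and apply Gaussian integration by parts; the only dynamical inputs are then $\dot\pos=\vel$ and $\dot\fcQ=\fcVel$, the zeroth-order cancellation being absorbed into the normalization, while your verification that $\fcVel\fcQ^*-\ii\Id=(\ReC-\mean{\jacobian{\mgPot}})\ImCinv$ is real is essentially the paper's Hagedorn computation in transposed form. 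Your version buys a cleaner conceptual statement (first- and second-moment transport of a Gaussian law) and avoids the explicit moment lemma; the paper's version stays entirely at the level of the parameters $(\pos,\ImC,\ImPha)$ and does not pass through a square-root factorization $\Sigma_t=L_tL_t^\top$.

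One small inaccuracy to repair: $\dot\Sigma_t=\tfrac{\scp}{2}\bigl(\fcVel\fcQ^*+\fcQ\fcVel^*\bigr)$ is $\scp$ times the Hermitian part of $\fcVel\fcQ^*$, which, by the reality of $\fcVel\fcQ^*-\ii\Id$, equals $\tfrac{\scp}{2}\bigl((\fcVel\fcQ^*-\ii\Id)+(\fcVel\fcQ^*-\ii\Id)^\top\bigr)$ and not $\scp\,\re(\fcVel\fcQ^*)$ itself; indeed $(\ReC-\mean{\jacobian{\mgPot}})\ImCinv$ is in general not symmetric, since $\mean{\jacobian{\mgPot}}$ has an antisymmetric part whenever the magnetic field is nonzero. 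This does not affect your conclusion, because $\dot\Sigma_t$ only appears inside $\tr\bigl(\mean{\nabla^2\somePots}\,\cdot\,\bigr)$ with the real symmetric matrix $\mean{\nabla^2\somePots}$, so only the symmetric part matters and the trace equals $\tr\bigl(\mean{\nabla^2\somePots}(\fcVel\fcQ^*-\ii\Id)\bigr)$ as claimed — but you should either state the identity for $\dot\Sigma_t$ with the symmetrization or pass directly to the traced quantity.
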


\begin{proof}
We differentiate the position density with respect to time,
\begin{align}
    \pt\abs{\vsol(t, x)}^2 
    &= \pt \exp\!\left(-\frac1\scp (x-\pos)^\top \ImC (x-\pos) - \frac2\scp \ImPha\right)\\
    &= \abs{\vsol(t, x)}^2\bigg( -\frac1\scp (x-\pos)^\top \dot\ImC (x-\pos) +\frac2\scp (x-\pos)^\top\ImC \dot\pos - \frac2\scp \dot\ImPha\bigg)\\
    &=        
    \abs{\vsol(t, x)}^2\bigg(\frac{2}{\scp}(x-\pos)^\top
    \big(
        \ReC - \mean{\jacobian{\mgPot}^\top}
    \big)\ImC(x-\pos) \nonumber\\
    &\qquad\qquad\quad
    + \frac{2}{\scp}(x-\pos)^\top\ImC\vel - \tr(\ReC)\bigg),
\end{align}
where we have used that, by Lemma~\ref{lem:var_equations}, the symmetry of $\ReC$ and $\ImC$, and $\tr(\mean{\jacobian{\mgPot}})=0$ since $\nabla \cdot A=0$,
\[
    \dot\pos = \vel,\quad
    \dot\ImC = \ImC(\mean{\jacobian{\mgPot}}-\ReC) + (\mean{\jacobian{\mgPot}^\top}-\ReC)\ImC,\quad
    \dot\ImPha = \frac\scp2 \tr(\ReC).
\]
%
Therefore, we obtain
\begin{equation+} 
\begin{aligned}
    \frac{d}{dt}\mean{\somePots} &= \frac{2}{\scp}\mean[\Big]{\somePots (x-\pos)^\top
        \big(\ReC - \mean{\jacobian{\mgPot}^\top}\big)\ImC(x-\pos)}  
         \\
        &\quad+ \frac{2}{\scp}\mean[\big]{\somePots\ImC (x-\pos)}^\top \vel - \mean{\somePots}\tr(\ReC)\\
        &=  \mean{\somePots}  \tr(\ReC - \mean{\jacobian{\mgPot}^\top}) + \mean{\nabla \somePots}^\top \vel - \mean{\somePots}\tr(\ReC) \\
        &\quad+ \frac{\scp}2 \tr(\mean{\nabla^2 \somePots} \ImCinv (\ReC - \mean{\jacobian{\mgPot}^\top}))\\
        &= \mean{\nabla \somePots}^\top \vel + \frac{\scp}2 \tr(\mean{\nabla^2 \somePots} \ImCinv (\ReC - \mean{\jacobian{\mgPot}^\top})),
        \end{aligned} 
\end{equation+} 
where the second equation relies on~\Cref{lem:Gauss_calc} with matrix $\someMatrix = \big(\ReC - \mean{\jacobian{\mgPot}^\top}\big)\ImC$ and the last equation on $\nabla\cdot\mgPot = 0$. It remains 
to express the occurring matrices in terms of $\fcQ$ and $\fcVel$. Using the properties of Hagedorn's parametrization from Sec.~\ref{subsec:hagedorn} yields
\begin{align}
\ImCinv\ReC &= \frac12\fcQ\fcQ^*(\fcP\fcQ^{-1}+\fcQ^{-*}{\fcP}^*)
\nonumber
\\
&=\frac12 \left( \fcQ({\fcP}^*\fcQ + 2\ii\Id)\fcQ^{-1} + \fcQ{\fcP}^*\right)
\nonumber\\
&=\fcQ({\fcVel}^* + \fcQ^*\mean{\jacobian{\mgPot}^\top}) + \i\Id. 
\label{eq:CIinvCR}
\end{align}
Hence we have $\ImCinv(\ReC-\mean{\jacobian{\mgPot}^\top}) = \fcQ{\fcVel}^* + \ii\Id$, and use the trace identity 
$\tr(MN) = \tr(MN^*)$ for the real symmetric matrix $M=\mean{\nabla^2\somePots}$ and the real matrix 
$N = \fcQ{\fcVel}^* + \ii\Id$. 
\end{proof}

Equipped with an equation for the time evolution of averages, we can determine the variational equations of motion solely in terms of the averaged magnetic momenta.

\begin{theorem}\label{Lemma:euler-lagrange-q-v}
    We consider the averaged magnetic momenta \eqref{eq:mag-momenta} and denote the magnetic field $\mgField = \curl\mgPot$. 
    The variational equations of motion for the Gaussian wave packet's center and width, equations~\eqref{eq:ham-qp} and~\eqref{eq:ham-Q-P}, can be transformed as  
    \begin{subequations}\label{eq:euler-lagrange-q-v}
        \begin{align+}
        \label{eq:eul-lag-q-v}
            \dot{\pos} &= \vel,  &
            \dot{\vel} &= \vel\times\mean{\mgField} + \auxPot, \\
        \label{eq:eul-lag-Q-Y}
            \dot \fcQ &= \fcVel,  & 
            \dot{\fcVel} &= \fcVel\times\mean{\mgField} + \auxauxPot\fcQ,
        \end{align+}
    \end{subequations}
where the matrix 
\[
\fcVel\times\mean{\mgField} \da (\upsilon_1\times \mean{\mgField},\ldots,\upsilon_d\times\mean{\mgField})
\] 
is given by the cross product of the column vectors $\upsilon_1,\ldots,\upsilon_d$ of $\fcVel$ with $\mean{\mgField}$. The real vector field $\auxPot$ satisfies
    \begin{align}
    \auxPot 
    &= 
    -
    \mean{\nabla\mltPot} 
    -
    \mean{\partial_t\mgPot} 
    + 
    \mean{\jacobian{\mgPot}^\top}\mean{\mgPot} 
    - 
    \mean{\jacobian{\mgPot}^\top\mgPot} \\
    &+
    \frac\scp2\Bigl(
    \tr\!\left( \mean{\partial_k\jacobian{\mgPot}^\top-\nabla^2\mgPot_k} (\fcVel\fcQ^* - \ii\Id) + \mean{\partial_k\jacobian{\mgPot}^\top} \mean{\jacobian{\mgPot}}\fcQ\fcQ^*\right)    
    \Bigr)_{k=1}^\dim, 
    \end{align}
and the real matrix potential $\auxauxPot$ can be written as
\begin{align}
&\auxauxPot =  
    - \mean{\nabla^2\mltPot} 
    - \mean{ \jacobian{\partial_t\mgPot}}
    + \mean{\jacobian{\mgPot}^\top}\mean{\jacobian{\mgPot}} 
    -     \mean{\jacobian{\mgPot}^\top\jacobian{\mgPot}} \\
&+
\sum_{m=1}^\dim 
    \mean{\nabla^2 \mgPot_m}\mean{\mgPot_m} 
    - \mean{(\nabla^2 \mgPot_m) \mgPot_m}
+  
\left(
    \mean{\partial_k\partial_\ell\mgPot_m-\partial_m\partial_\ell\mgPot_k}\vel_m
\right)_{k,\ell=1}^\dim 
 \\
& +\frac{\scp}{2} \left( 
\tr\left( \mean{\partial_k\partial_\ell\jacobian{\mgPot}^\top-\nabla^2\partial_\ell \mgPot_k}  (\fcVel\fcQ^*-\ii\Id) + \mean{\partial_k\partial_\ell \jacobian{\mgPot}^{\top}} \mean{\jacobian{\mgPot}}\fcQ\fcQ^* \right) \right)_{k,\ell=1}^\dim.
\end{align}
      
\end{theorem}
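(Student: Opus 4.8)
The plan is to obtain \eqref{eq:euler-lagrange-q-v} by differentiating the averaged magnetic momenta \eqref{eq:mag-momenta} in time, inserting the equations of \Cref{lem:var_equations}, and evaluating every time derivative of an average with the Ehrenfest identity of \Cref{prop:Ehrenfest}. The first equations of \eqref{eq:eul-lag-q-v} and \eqref{eq:eul-lag-Q-Y} are immediate: $\dot\pos=\mom-\mean{\mgPot}=\vel$ by \eqref{eq:ham-qp} and $\dot\fcQ=\fcP-\mean{\jacobian{\mgPot}}\fcQ=\fcVel$ by \eqref{eq:ham-Q}. For the remaining two I write $\dot\vel=\tfrac{d}{dt}\mom-\tfrac{d}{dt}\mean{\mgPot}$ and $\dot\fcVel=\tfrac{d}{dt}\fcP-\bigl(\tfrac{d}{dt}\mean{\jacobian{\mgPot}}\bigr)\fcQ-\mean{\jacobian{\mgPot}}\fcVel$, substitute $\dot\mom$ and $\dot\fcP$ from \eqref{eq:ham-qp} and \eqref{eq:ham-P}, and compute $\tfrac{d}{dt}\mean{\mgPot_k}$ and $\tfrac{d}{dt}\mean{\partial_\ell\mgPot_k}$ from \Cref{prop:Ehrenfest} applied to the scalar functions $\mgPot_k$ and $\partial_\ell\mgPot_k$. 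Since the potentials depend on $t$, the chain rule adds the terms $\mean{\partial_t\mgPot_k}$ and $\mean{\partial_t\partial_\ell\mgPot_k}$ that the time-independent statement of the proposition does not cover.

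The one input that needs care is that $\mean{\jacobian{\mgPot}^\top(\xi-\mgPot)}$ (and likewise the $\xi$-linear object $\sum_k\nabla^2\mgPot_k(\xi_k-\mgPot_k)$ in \eqref{eq:ham-P}) is affine in $\xi$, so its Weyl average is not just the point value. Using $-\ii\scp\partial_{x_\ell}\vsol=(\mom_\ell+(\wm(x-\pos))_\ell)\vsol$ together with the Gaussian moment identity $\mean{a\,(x-\pos)_\ell}=\tfrac\scp2\sum_m(\ICinv)_{\ell m}\mean{\partial_m a}$ of \Cref{lem:Gauss_calc} and $\re(\wm\ICinv)=\ReC\ICinv$, one finds $\mean{\jacobian{\mgPot}^\top(\xi-\mgPot)}=\mean{\jacobian{\mgPot}^\top}\mom-\mean{\jacobian{\mgPot}^\top\mgPot}+\tfrac\scp2(\tr(\mean{\partial_k\jacobian{\mgPot}^\top}\ReC\ICinv))_{k=1}^\dim$, and an analogous expansion one gradient higher for the Hessian term. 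Now split $\mom=\vel+\mean{\mgPot}$ and $\fcP=\fcVel+\mean{\jacobian{\mgPot}}\fcQ$: the $\mean{\jacobian{\mgPot}^\top}\vel$ so produced cancels the contribution $-\mean{\nabla\mgPot_k}^\top\vel=-(\mean{\jacobian{\mgPot}}\vel)_k$ coming out of \Cref{prop:Ehrenfest}, and what survives is the antisymmetric combination $(\mean{\jacobian{\mgPot}^\top}-\mean{\jacobian{\mgPot}})\vel$, whose $(i,j)$-entry is $\mean{\partial_iA_j-\partial_jA_i}$. The elementary identity $\sum_j(\partial_iA_j-\partial_jA_i)w_j=(w\times\curl\mgPot)_i$ rewrites this as $\vel\times\mean{\mgField}$, and the same argument applied column by column gives $\fcVel\times\mean{\mgField}$.

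Everything that is not magnetic is then, by definition, $\auxPot$, resp.\ $\auxauxPot\fcQ$. For the centre this leaves $-\mean{\nabla\mltPot}-\mean{\partial_t\mgPot}+\mean{\jacobian{\mgPot}^\top}\mean{\mgPot}-\mean{\jacobian{\mgPot}^\top\mgPot}$ plus two $\O(\scp)$ contributions, the one from the $\xi$-expansion above and the one $-\tfrac\scp2\tr(\mean{\nabla^2\mgPot_k}(\fcVel\fcQ^*-\ii\Id))$ from \Cref{prop:Ehrenfest}. Transposing \eqref{eq:CIinvCR} and using that $\ReC\ICinv$ is real yields $\ReC\ICinv=\fcVel\fcQ^*-\ii\Id+\mean{\jacobian{\mgPot}}\fcQ\fcQ^*$, which splits the first contribution into the $(\fcVel\fcQ^*-\ii\Id)$-trace and the $\mean{\partial_k\jacobian{\mgPot}^\top}\mean{\jacobian{\mgPot}}\fcQ\fcQ^*$-trace of $\auxPot$; combining with the second contribution turns the two remaining factors into the difference $\mean{\partial_k\jacobian{\mgPot}^\top-\nabla^2\mgPot_k}$. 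For the width the same steps run one derivative higher: re-splitting $\mom_k=\vel_k+\mean{\mgPot_k}$ inside $\sum_k\mean{\nabla^2\mgPot_k}\mom_k\fcQ$ and combining with the $\vel$-term $\sum_m\mean{\partial_m\partial_\ell\mgPot_k}\vel_m$ produced by $\tfrac{d}{dt}\mean{\jacobian{\mgPot}}$ gives the $(\mean{\partial_k\partial_\ell\mgPot_m-\partial_m\partial_\ell\mgPot_k}\vel_m)_{k,\ell=1}^\dim$ block of $\auxauxPot$, and the $\O(\scp)$ accounting is identical with $\partial_k\jacobian{\mgPot}^\top$ replaced by $\partial_k\partial_\ell\jacobian{\mgPot}^\top$ and $\nabla^2\mgPot_k$ by $\nabla^2\partial_\ell\mgPot_k$.

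The routine parts are the two vector identities, the re-splittings of $\mom$ and $\fcP$, and the collection of the $\scp$-independent terms. The real obstacle is the $\O(\scp)$ bookkeeping: Weyl-expanding the phase-space functions that are affine in $\xi$ with the correct index placement, and then translating every $\ReC$ and $\ICinv$ into the Hagedorn matrices $\fcQ,\fcVel$ via \eqref{eq:CIinvCR} while keeping transposes, complex conjugates, and the reality of $\fcVel\fcQ^*-\ii\Id$ straight, so that the resulting traces match the expressions displayed for $\auxPot$ and $\auxauxPot$ verbatim.
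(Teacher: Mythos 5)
Your proposal is correct and follows essentially the same route as the paper's proof: differentiate the magnetic momenta, insert the equations of motion of \Cref{lem:var_equations}, evaluate $\tfrac{d}{dt}\mean{\mgPot}$ and $\tfrac{d}{dt}\mean{\jacobian{\mgPot}}$ with \Cref{prop:Ehrenfest} (plus the explicit $\partial_t$ terms), expand the $\xi$-affine averages via $-\ii\scp\nabla\vsol=(\wm(x-\pos)+\mom)\vsol$ and \Cref{lem:Gauss_calc}, use the antisymmetry-to-cross-product identity, and convert to Hagedorn variables through \eqref{eq:CIinvCR}. The only (harmless) cosmetic difference is that you pass to $\ReC\ICinv$ by taking real parts early, whereas the paper keeps $\wm\ICinv$ and inserts the $-\ii\Id$ at the end using $\Div\mgPot=0$.
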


\begin{proof}
By \eqref{eq:ham-qp}, the time derivative of the magnetic momenta satisfies
\begin{align} \label{eq:dot-vel}
\dot\vel &= \dot\mom - \frac{d}{dt} \mean{\mgPot}
= \mean{\jacobian{\mgPot}^\top(\xi-\mgPot)-\nabla\mltPot} - \frac{d}{dt} \mean{\mgPot}.
\end{align}
Hence, we have to work on $\mean{\jacobian{\mgPot}^\top\xi}$. By symbolic Weyl calculus, see for example \cite[\S2.4]{RobC21_book}, we expand the operator of the product as 
\begin{align}
    \op(\jacobian{\mgPot}^\top \xi) &= \jacobian{\mgPot}^\top\op(\xi) + \frac{\ii\scp}{2} \op(\{\jacobian{\mgPot}^\top,\xi\})
    = \jacobian{\mgPot}^\top (-\ii\scp\nabla),
\end{align}
where the last equation uses that $\nabla\cdot \mgPot = 0$ implies that
$
\{\jacobian{\mgPot}^\top,\xi\} = -\sum_{k=1}^\dim \partial_{x_k}(\jacobian{\mgPot}^\top)\partial_{\xi_k}\xi = -\sum_{k=1}^\dim \partial_{x_k} \nabla \mgPot_k = 0.
$
Therefore, 
\begin{align}
\mean{\jacobian{\mgPot}^\top\xi} &= \langle \vsol,\jacobian{\mgPot}^\top (-\ii\scp\nabla)\vsol\rangle 
    = \mean{\jacobian{\mgPot}^\top \wm(x-\pos)} + \mean{\jacobian{\mgPot}^\top}\mom
\end{align}
due to 
$-\ii\scp\nabla\vsol(x) = (\wm(x-\pos) + \mom)\vsol(x)$. Then, we apply \Cref{lem:Gauss_calc} with $\somePot = \jacobian{\mgPot}^\top\wm$,  
\begin{align}
    \mean{\jacobian{\mgPot}^\top \wm(x-\pos)} &= 
\frac\scp2 \sum_{\ell=1}^\dim 
\left(  \left(\mean{\partial_\ell\jacobian{\mgPot}^\top} \wm\ImCinv\right)_{k\ell} \right)_{k=1}^\dim\\
&=    
    \frac\scp2 \sum_{m,\ell=1}^\dim \left(\mean{ \partial_\ell\partial_k \mgPot_m} \left(\fcVel\fcQ^* + \mean{\jacobian{\mgPot}}\fcQ\fcQ^* - \ii\Id\right)_{m\ell} \right)_{k=1}^\dim\\
    &=
    \frac\scp2 \left( \tr\!(\mean{\partial_k\jacobian{\mgPot}}^\top ( \fcVel\fcQ^* -\ii\Id  + \mean{\jacobian{\mgPot}}\fcQ\fcQ^*) \right)_{k=1}^\dim,\label{eq:meanC}
\end{align}
since $\wm\ImCinv = (\fcP\fcQ^{-1})(\fcQ\fcQ^*) = (\fcVel + \mean{\jacobian{\mgPot}}\fcQ)\fcQ^*$ and $\Div A = 0$.
We thus obtain
\begin{align}
    \mean{\jacobian{\mgPot}^\top(\xi-\mgPot)} &= 
    \mean{\jacobian{\mgPot}^\top} \vel 
    + 
    \mean{\jacobian{\mgPot}^\top}\mean{\mgPot} 
    - 
    \mean{\jacobian{\mgPot}^\top\mgPot} \\
    &+ 
   \frac\scp2 \left( \tr\!(\mean{\partial_k\jacobian{\mgPot}}^\top (\fcVel\fcQ^* - \ii \Id + \mean{\jacobian{\mgPot}}\fcQ\fcQ^*) \right)_{k=1}^\dim.
\end{align}
Now, we use \Cref{prop:Ehrenfest} for each component of the magnetic potential,
\[
\frac{d}{dt} \mean{\mgPot} 
= 
\mean{\partial_t\mgPot} 
+ 
\mean{\jacobian{\mgPot}}\vel 
+
\frac\scp2\Bigl(
\tr\!\bigl(
   \mean{\nabla^2 \mgPot_k}(\fcVel\fcQ^*-\ii\Id)
\bigr)
\Bigr)_{k=1}^\dim.
\]
When collecting all the terms that originated in \eqref{eq:dot-vel}, we observe the occurrence of $\mean{\jacobian{\mgPot}^\top}\vel$ and $-\mean{\jacobian{\mgPot}}\vel$, which combine according to 
\begin{equation}\label{eq:jacobian-to-cross}
        \mean{\left(\jacobian{\mgPot}^\top - \jacobian{\mgPot}\right)}\vel 
        = \vel\times\mean{\mgField}.
    \end{equation}
We thus arrive at the claimed equation \eqref{eq:eul-lag-q-v}. As for the matrix $\fcVel = \fcP - \mean{\jacobian{\mgPot}}\fcQ$, 
we use the equations of motion \eqref{eq:ham-Q-P}, which contain the average of $\partial_x^2\cham= \jacobian{\mgPot}^\top\jacobian{\mgPot} - \sum_{m=1}^\dim \nabla^2 \mgPot_m(\xi_m-\mgPot_m) + \nabla^2\mltPot$. We have 
\[
\dot\fcP 
= 
\mean{\jacobian{\mgPot}^\top}\left(\fcVel + \mean{\jacobian{\mgPot}}\fcQ\right)
- \mean{\partial^2_x\cham}\fcQ.
\]
For computing $\mean{\partial^2_x\cham}$, we observe that 
\begin{align}
\sum_{m=1}^\dim \op(\nabla^2 \mgPot_m\xi_m) 
&= 
\sum_{m=1}^\dim \left( 
    \nabla^2 \mgPot_m \op(\xi_m)
    + 
    \frac{\ii\scp}{2} \op\{\nabla^2\mgPot_m,\xi_m\} 
\right) \\
&= 
\sum_{m=1}^\dim \nabla^2 \mgPot_m (-\ii\scp\partial_m),
\end{align}
where the last equation uses that $\sum_{m=1}^\dim\{\nabla^2\mgPot_m,\xi_m\} = 0$ due to $\nabla\cdot\mgPot = 0$. 
Arguing as for \eqref{eq:meanC}, we obtain
\begin{align}
&\sum_{m=1}^\dim \mean{\nabla^2 \mgPot_m(\xi_m-\mgPot_m)} \\
&= 
\sum_{m=1}^\dim \left(
    \mean{\nabla^2 \mgPot_m \bigl(\wm(x-\pos)\bigr)_m} 
    -
    \mean{(\nabla^2 \mgPot_m) \mgPot_m}
    + 
    \mean{\nabla^2 \mgPot_m}\mom_m
\right)\\
&= 
\sum_{m=1}^\dim \left(
    \mean{\nabla^2 \mgPot_m}\left(\vel_m + \mean{\mgPot_m}\right) 
    - \mean{(\nabla^2 \mgPot_m) \mgPot_m}
\right) \\
&\quad + 
\frac{\scp}{2}\sum_{m,n=1}^\dim \mean{\nabla^2 \partial_n\mgPot_m} 
\left(\fcVel\fcQ^* -\ii\Id + \mean{\jacobian{\mgPot}}\fcQ\fcQ^*) \right)_{mn},
\end{align}
and therefore
\begin{align}
\mean{\partial_x^2\cham} &= \mean{\jacobian{\mgPot}^\top\jacobian{\mgPot}+\nabla^2\mltPot} 
- \sum_{m=1}^\dim \left(
    \mean{\nabla^2 \mgPot_m}\left(\vel_m + \mean{\mgPot_m}\right) 
    - \mean{(\nabla^2 \mgPot_m) \mgPot_m}\right)\\
    &-\frac{\scp}{2}  \left( \tr\!\left(\mean{\partial_k\partial_\ell \jacobian{\mgPot}^\top} 
\left( \fcVel\fcQ^* - \ii\Id + \mean{\jacobian{\mgPot}}\fcQ\fcQ^*\right)\right)\right)_{k,\ell=1}^\dim .\label{eq:2xdiffham}
\end{align}
We next use \Cref{prop:Ehrenfest} applied to each component of the Jacobian matrix $\jacobian{\mgPot}= (\partial_\ell \mgPot_k)_{k,\ell=1}^\dim$ and obtain
\begin{align}
    &\frac{d}{dt}\mean{ \jacobian{\mgPot}} 
    = 
    \mean{ \jacobian{\partial_t\mgPot}}\\ 
    &+ 
    \sum_{m=1}^\dim \left(
        \mean{\partial_m\partial_\ell\mgPot_k}\vel_m
    \right)_{k,\ell=1}^\dim 
    +
    \frac\scp2\left(
        \tr\! \Bigl(
        \mean{\nabla^2 \partial_\ell\mgPot_k}(\fcVel\fcQ^*-\ii\Id)
        \Bigr)
    \right)_{k,\ell=1}^\dim .
\end{align}
Now we combine and arrive at
\begin{align}
\dot\fcVel 
&= 
\dot\fcP 
- 
\left(\frac{d}{dt}\mean{\jacobian{\mgPot}}\right) \fcQ 
- 
\mean{ \jacobian{\mgPot}} \dot\fcQ 
=  
\mean{\jacobian{\mgPot}^\top - \jacobian{\mgPot}} \fcVel 
+ \auxauxPot\fcQ
\end{align}
with the matrix potential $\auxauxPot$ of the claimed form. 
\end{proof}

\begin{remark}[Linear potential \(\mgPot\)]\label{rem:linear}
    For a linear magnetic potential \(\mgPot\), all higher order derivatives vanish and the average \(\mean{\mgPot} = \mgPot(\pos)\) is a point evaluation. Thus, the equations of motion of \Cref{Lemma:euler-lagrange-q-v} simplify to
    \begin{subequations}\label{eq:simple-euler-lagrange-system}
        \begin{align+}
            \dot{\pos} &= \vel, &
            \dot{\vel} &= \vel\times\mgField(\pos) - (\pt\mgPot(\pos) + \mean{\nabla\mltPot}), \label{eq:simple-qv}\\
            \dot{\fcQ} &= \fcVel, &
            \dot{\fcVel} &= \fcVel\times\mgField(\pos) - (\pt\jacobian{\mgPot}(\pos) + \mean{\nabla^2\mltPot})\fcQ.
            \label{eq:simple-QY}
        \end{align+}
    \end{subequations}
        The Penning trap, see \Cref{subsec:intro-Penning-trap} and \Cref{subsec:Penning-trap}, with its quadratic electric potential, has even simpler equations of motion, since also the averages of the electric potential collapse to $\mean{\nabla\mltPot} = \nabla\mltPot(\pos)$ and $\mean{\nabla^2\mltPot} = \nabla^2\mltPot(\pos)$.
\end{remark}

\subsection{Equations for the phase}\label{subsec:splitting_zeta}
The imaginary part of the complex parameter 
\(\pha = \RePha+\ii\ImPha\) carries the normalization of the Gaussian wave packet. We derive an explicit representation, which only depends on the determinant of the complex matrix \(\fcQ\). This representation will support our time discretization of \(\ImPha\) in \Cref{subsec:proposed-alg}.

\begin{lemma}
\begin{subequations} \label{eq:Pha-eqs-all}
    For the real part $\RePha = \re(\pha)$ we have
    \begin{equation}\label{eq:zeta_R}
   \dot\RePha = \frac12 {\vel}^2 + \mean{\mgPot}^\top\vel - \mean{\mltPot} + \frac\scp4 \tr(\mean{\partial_x^2\cham}\fcQ\fcQ^* - 2(\fcQ\fcQ^*)^{-1}),
   \end{equation}
    where the average $\mean{\partial^2_x\cham}$ is given in \eqref{eq:2xdiffham} with respect to $(\pos,\vel,\fcQ,\fcVel)$.    
   The imaginary part $\ImPha = \im(\pha)$ satisfies the normalization formula
    \begin{equation}\label{eq:relation_zeta_I}
        \ImPha(t) = \ImPha(0) + \frac{\scp}{2}\left(\ln\abs{\det Q(t)} - \ln\abs{\det Q(0)}\right).
    \end{equation}
\end{subequations}   
\end{lemma}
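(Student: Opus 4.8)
The plan is to split the phase equation \eqref{eq:eqmo_zeta-l31} into its real and imaginary parts and to establish \eqref{eq:relation_zeta_I} and \eqref{eq:zeta_R} separately, using Hagedorn's parametrization of \Cref{subsec:hagedorn} and the hypothesis that $\mgPot$ is \divfree, so that $\tr\mean{\jacobian{\mgPot}} = 0$.

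For the imaginary part I would start from $\dot\ImPha = \frac\scp2\tr(\ReC)$, which was already obtained in the proof of \Cref{prop:Ehrenfest}, and recognise the right-hand side as a logarithmic derivative of $\det\fcQ$. By Jacobi's formula $\frac{d}{dt}\ln\det\fcQ = \tr(\fcQ^{-1}\dot\fcQ)$, hence $\frac{d}{dt}\ln\abs{\det\fcQ} = \re\tr(\fcQ^{-1}\dot\fcQ)$. Inserting $\dot\fcQ = \fcP - \mean{\jacobian{\mgPot}}\fcQ$ from \eqref{eq:ham-Q} and using $\tr\mean{\jacobian{\mgPot}} = 0$ gives $\tr(\fcQ^{-1}\dot\fcQ) = \tr(\fcP\fcQ^{-1})$, whose real part equals $\tr(\ReC)$ because $\ReC = \frac12(\fcP\fcQ^{-1} + \fcQ^{-*}\fcP^*)$. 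Therefore $\dot\ImPha = \frac\scp2\frac{d}{dt}\ln\abs{\det\fcQ}$, and integrating from $0$ to $t$ yields \eqref{eq:relation_zeta_I}; this direction is essentially bookkeeping once the earlier results are in hand.

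For the real part I would take $\re$ of \eqref{eq:eqmo_zeta-l31}. The summand $\mom^\top(\mom - \mean{\mgPot}) = \mom^\top\vel$ is real, and $\mom = \vel + \mean{\mgPot}$ rewrites it as $\vel^2 + \mean{\mgPot}^\top\vel$; the term $-\langle \cham\rangle_{\vsol} = -\langle \Ham\rangle_{\vsol}$ is real by self-adjointness of $\Ham$, and for it I would insert the closed-form expression for the magnetic energy of a \GWP from \Cref{appendix}, which writes $\langle \cham\rangle_{\vsol}$ as $\frac12\vel^2 + \mean{\mltPot}$ plus $\O(\scp)$ corrections assembled from $\ReC$, $\ImC$ and $\ICinv = \fcQ\fcQ^*$. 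Collecting the non-$\scp$ pieces then yields the first three terms $\frac12\vel^2 + \mean{\mgPot}^\top\vel - \mean{\mltPot}$ of \eqref{eq:zeta_R}.

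It remains to match the $\O(\scp)$ contributions with $\frac\scp4\tr(\mean{\partial_x^2\cham}\fcQ\fcQ^* - 2(\fcQ\fcQ^*)^{-1})$. For this I would use $\mathcal B(\wm) = \mean{\partial_x^2\cham} - \mean{\jacobian{\mgPot}^\top}\wm - \wm\mean{\jacobian{\mgPot}} + \wm^2$ from the proof of \Cref{lem:var_equations}, the relation $\wm\fcQ = \fcP$, and the symplecticity conditions $\fcQ^*\fcP - \fcP^*\fcQ = 2\ii\Id$, $\fcQ^\top\fcP - \fcP^\top\fcQ = 0$: the first turns $\frac\scp4\re\tr(\wm^2\ICinv)$ into an expression contributing the $-\frac\scp2\tr((\fcQ\fcQ^*)^{-1})$ term, while the two Jacobian cross terms $\mean{\jacobian{\mgPot}^\top}\wm\ICinv$ and $\wm\mean{\jacobian{\mgPot}}\ICinv$ have equal real parts — using $\tr(MN^*) = \overline{\tr(M^*N)}$ together with $\ICinv\ReC = \fcQ\fcP^* + \ii\Id$, a consequence of \eqref{eq:CIinvCR} — and should combine with the corresponding corrections in the magnetic-energy formula, leaving $\frac\scp4\tr(\mean{\partial_x^2\cham}\ICinv) = \frac\scp4\tr(\mean{\partial_x^2\cham}\fcQ\fcQ^*)$. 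The main obstacle is precisely this reconciliation: one has to keep careful track of which matrix traces are real, exploit repeatedly that traces of products of real matrices are real, and check that every term not present in \eqref{eq:zeta_R} cancels. Everything else is elementary matrix algebra.
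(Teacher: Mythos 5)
Your proposal is correct and follows essentially the same route as the paper: split $\pha$ into real and imaginary parts, obtain \eqref{eq:relation_zeta_I} from $\dot\ImPha = \tfrac{\scp}{2}\tr(\ReC)$ together with Jacobi's formula and $\tr\mean{\jacobian{\mgPot}}=0$, and obtain \eqref{eq:zeta_R} by taking the real part of \eqref{eq:eqmo_zeta-l31}, inserting the energy formula of \Cref{Lemma:energy_equation} and the expression for $\re\mathcal B(\wm)$, and reconciling the $\O(\scp)$ traces via the Hagedorn identities. The only difference is cosmetic bookkeeping (which correction cancels against which term of the energy remainder), not a different method.
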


\begin{proof}
    We start with the normalization formula. 
    Since $\im\mathcal B(\wm) = -\mean{\jacobian{\mgPot}^\top}\ImC-\ImC\mean{\jacobian{\mgPot}} + \ReC\ImC + \ImC\ReC$ and 
    \(\Div \mgPot = 0\), we have 
    \begin{align}
    \im\left(\tr(\mathcal B(\wm)\ImCinv)\right) &= 2\,\tr(\ReC)
    = 2\,\tr(\re(\fcP\fcQ^{-1})) \\
    &= 2\,\tr(\re((\dot\fcQ+\mean{\jacobian{\mgPot}}\fcQ)\fcQ^{-1})) \\
    &= 2\,\tr(\re(\dot\fcQ\fcQ^{-1})).
    \end{align}
    We thus obtain from \eqref{eq:eqmo_zeta-l31} with Jacobi's formula
    \begin{align}
        \dot{\ImPha} &= \frac{\scp}{2}\tr(\re(\dot{\fcQ}\fcQ^{-1})) \nonumber \\
                      &= \frac{\scp}{4}\frac{1}{\abs{\det \fcQ}^2}\left(2\re\left(\overline{\det \fcQ}\det \fcQ~\tr\big(\dot{\fcQ}\fcQ^{-1}\big)\right)\right) \nonumber \\
                      &= \frac{\scp}{4}\frac{1}{\abs{\det \fcQ}^2}\left(2\re(\overline{\det \fcQ}~\pt(\det \fcQ))\right) \nonumber \\
                      &= \frac{\scp}{4}\pt\left(\ln\abs{\det \fcQ}^2\right) \ . \label{eq:eqmo-Imzeta}
    \end{align}
    Integrating~\eqref{eq:eqmo-Imzeta} from \(0\) to \(t\) leads 
to~\eqref{eq:relation_zeta_I}. For the real part, we have $\re\mathcal{B}(\wm) = \mean{\partial_x^2\cham} - \mean{\jacobian{\mgPot}^\top}\ReC - \ReC\mean{\jacobian{\mgPot}} + \ReC^2 - \ImC^2$ and thus
\[
\tr(\re\mathcal{B}(\wm)\ImCinv) = \tr\left(\left(\mean{\partial_x^2\cham}-2\mean{\jacobian{\mgPot}^\top}\ReC+ (\ReC^2-\ImC^2)\right)\ImCinv\right).
\]
Combining this with $-\mean{\cham}$, we use \Cref{Lemma:energy_equation} and obtain 
\begin{align}
&-\mean{\cham} + \frac\scp4 \tr(\re\mathcal{B}(\wm)\ImCinv) \\
&= 
-\frac12\mean{(p-\mgPot)^2} - \mean{\mltPot} + \frac\scp4 \tr(\mean{\partial_x^2\cham}\ImCinv - 2\ImC).
\end{align}
Next we observe that
\begin{align}
-\frac12\mean{(p-\mgPot)^2}  + \mom^\top\mean{\mom-\mgPot} &= \frac12 \mom^\top\mean{\mom-\mgPot} 
+ \frac12 \mean{\mgPot}^\top\mean{p-\mgPot} \\
&= \frac12 {\vel}^2 + \mean{\mgPot}^\top\vel.
\end{align}
Using that $\ImC = (\fcQ\fcQ^*)^{-1}$, the real part of the evolution equation \eqref{eq:eqmo_zeta-l31} can thus be written in the claimed form. 
\end{proof}

%

\section{Time integration for the equations of motion}\label{sec:time-int}

In this section we first briefly review the classical Boris algorithm. Afterwards, we present the new Boris-type algorithm and a modification of the classical Runge--Kutta method to solve \eqref{eq:euler-lagrange-q-v}  and~\eqref{eq:Pha-eqs-all}.
%

\subsection{Boris algorithm for classical equations of motion}\label{subsec:boris-algo}

The Boris algorithm was originally proposed in~\cite{Boris70} for solving the classical equations of motion 
\begin{equation}\label{eq:classical-EM-system}
        \dot{\pos} = \vel, \qquad
        \dot{\vel} = \vel\times \mgField + \elField.
\end{equation}
for charged particles in an \electroMagnetic field.
We consider a time-grid $\tn[n]$, $n\ge 0$, with step size 
\(\tau>0\).
Given approximations \(\pos[n]\approx\pos(\tn[n])\) and \(\vel[n-\frac12]\approx\vel(\tn[n-\frac12])\), the algorithm can be written as
\begin{subequations}\label{eq:Boris-Algo}
    \begin{align+}
        \vel[-] &= \vel[n-\frac12] + \frac{\tau}{2} \elField[n], 
        & \elField^n &= \elField(\tn[n], \pos[n]),\\
        \vel[+] -\vel[-] &= 
        \frac{\tau}{2} \bigl( \vel[+] +\vel[-] \bigr) \times \mgField[n],
        & \mgField[n] &= \mgField(\tn[n], \pos[n]),
        \label{eq:boris-impl}
        \\
        \vel[n+\frac12] &= \vel[+] + \frac{\tau}{2} \elField[n], 
        &&\\
        \pos[n+1] &= \pos[n] + \tau\vel[n+\frac12].
        &&
    \end{align+}
Note that the algorithm provides approximations on a staggered grid, where the velocities are only given at half time-steps. Approximations at $\tn[n]$ can be obtained by averaging
\begin{align+}\label{eq:average-vel}
    \vel[n] = \frac12\bigl(\vel[n+\frac12] + \vel[n-\frac12]\bigl) .
\end{align+}
Moreover, the scheme is explicit, since one can replace \eqref{eq:boris-impl} by
\begin{align+} \label{eq:boris-explicit}
            \vel[+] = \vel[-] + \left(\vel[-] + \vel[-]\times \frac{\tau}{2}\mgField^n \right)\times 
            \frac{\tau \mgField^n}{1+ \abs{\frac{\tau}{2} \mgField^n}^2},
\end{align+}
\end{subequations}
see, e.g., \cite{Bir18}.
The Boris algorithm is a \secondorder method which is not symplectic but conserves the phase-space volume as shown in~\cite{QinZXLST13}. A recent analysis was presented in~\cite{HaiL20}. 

\subsection{Boris-type algorithm}\label{subsec:proposed-alg}

We aim at solving the
\EulerLagrange system~\eqref{eq:euler-lagrange-q-v} together with the two phase equations
\eqref{eq:zeta_R} and~\eqref{eq:relation_zeta_I}. The former are closely related to the classical equations of motion of a charged particle \eqref{eq:classical-EM-system} except that $\mgField$ is replaced by an averaged field $\mean{\mgField}$ and that the fields $\elField$ and $\auxauxPot\fcQ$ do not only depend on \(\pos\) and \(\fcQ\) but also in a nontrivial way on \(\vel\) and \(\fcVel\).
While the means $\mean{\mgField}$ can be approximated by a suitable Gau{\ss}-Hermite quadrature formula, evaluating the fields $\elField$ and $\auxauxPot$ is more involved since they require approximations to \(\vel\) and \(\fcVel\) at time $\tn[n]$. Unfortunately, these quantities are only defined on the staggered grid $\tn[n\pm\frac12]$ and even worse, the update of $\fcVel$ is coupled to the evaluation of $\auxauxPot\fcQ$, rendering the scheme implicit.

To be more precise, it is the matrix 
$\fcVel$ which is necessary to compute the fields \(\elField\) and \(\auxauxPot\) in \Cref{Lemma:euler-lagrange-q-v}.
Averaging \(\fcVel\) as in \eqref{eq:average-vel} would lead to a nonlinear system in $\fcVel[n+\frac12]$. Therefore we propose the second-order extrapolation 
\begin{align+}\label{eq:extrap-upsilon}
    \fcVelExtr[n] = \frac32\fcVel[n-\frac12] - \frac12\fcVel[n-\frac32],
\end{align+}
and compute 
also $\auxPot[n]$ and $\auxauxPot[n]$ from it.
In our numerical experiments we saw that using fixed-point iterations to improve the accuracy of $\fcVelExtr[n]$ did not change the errors.
%

\subsection{Discretization of the phase}
Motivated by~\eqref{eq:relation_zeta_I}, we define the approximation to \(\ImPha(\tn[n+1])\) as
\begin{align+}\label{eq:relation_zeta_I_approx}
    \ImPha[n+1] = \ImPha[n] + \frac{\scp}{2}\left(\ln\abs{\det\fcQ[n+1]} - \ln\abs{\det\fcQ[n]}\right).
\end{align+}
This update formula ensures norm preservation of the wave packet.
\begin{lemma}
    Let \(\vsol[n]\) and \(\vsol[n+1]\) denote two \GWPs with parameters \((\pos[n], \vel[n], \fcQ[n], \fcVel[n], \pha[n])\) and \((\pos[n+1], \vel[n+1], \fcQ[n+1], \fcVel[n+1], \pha[n+1])\), respectively. Then~\eqref{eq:relation_zeta_I_approx} is equivalent to
    \begin{equation}
        \normLtwo{\vsol[n]} = \normLtwo{\vsol[n+1]} .
    \end{equation}
\end{lemma}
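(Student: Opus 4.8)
The plan is to compute the squared $L^2$-norm $\normLtwo{\vsol}^2$ of a \GWP explicitly as a function of its parameters, and then to observe that the update rule~\eqref{eq:relation_zeta_I_approx} is exactly the condition under which this explicit expression takes the same value for $\vsol[n]$ and $\vsol[n+1]$.

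First I would record the modulus squared of a \GWP with parameters $(\pos,\vel,\fcQ,\fcVel,\pha)$. As in the first display in the proof of \Cref{prop:Ehrenfest}, the oscillatory phase cancels against its conjugate and only the imaginary parts survive, so that
\[
\abs{\vsol(x)}^2 = \exp\!\Bigl(-\tfrac1\scp (x-\pos)^\top \IC (x-\pos) - \tfrac2\scp \ImPha\Bigr),
\qquad \IC = \im\wm,\ \ImPha = \im\pha .
\]
In particular, $\abs{\vsol}$ does not see the real parts $\ReC$, $\RePha$. Integrating this Gaussian over $\Rd$ with the standard identity $\intRd \exp(-x^\top M x)\dx = \pi^{\dim/2}(\det M)^{-1/2}$ for symmetric positive definite $M$ (here with $M = \IC/\scp$, whence $\det M = (\det\IC)/\scp^{\dim}$) gives
\[
\normLtwo{\vsol}^2 = \frac{(\pi\scp)^{\dim/2}}{\sqrt{\det \IC}}\,\exp\!\Bigl(-\tfrac2\scp \ImPha\Bigr).
\]
Now I would insert Hagedorn's parametrisation~\eqref{eq:Hag_fac}, $\IC = (\fcQ\fcQ^*)^{-1}$, which yields $\det\IC = \abs{\det\fcQ}^{-2}$ and hence $\sqrt{\det\IC} = \abs{\det\fcQ}^{-1}$ (note $\fcQ$ is invertible), so that
\[
\normLtwo{\vsol}^2 = (\pi\scp)^{\dim/2}\,\abs{\det\fcQ}\,\exp\!\Bigl(-\tfrac2\scp \ImPha\Bigr).
\]

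To finish, I would take logarithms. Since the prefactor $(\pi\scp)^{\dim/2}>0$ is the same for $\vsol[n]$ and $\vsol[n+1]$, the equality $\normLtwo{\vsol[n]} = \normLtwo{\vsol[n+1]}$ is equivalent to
\[
\ln\abs{\det\fcQ[n]} - \tfrac2\scp\ImPha[n] = \ln\abs{\det\fcQ[n+1]} - \tfrac2\scp\ImPha[n+1],
\]
and solving this for $\ImPha[n+1]$ gives precisely~\eqref{eq:relation_zeta_I_approx}; conversely,~\eqref{eq:relation_zeta_I_approx} implies this identity and thus the norm equality. There is no genuine difficulty here: the only points that need care are keeping the $\scp$-scaling in the Gaussian normalisation constant correct and passing cleanly from $\det\IC$ to $\abs{\det\fcQ}$ via $\IC = (\fcQ\fcQ^*)^{-1}$; the common positive constant $(\pi\scp)^{\dim/2}$ (together with the fact that $\ReC$ and $\RePha$ play no role in $\abs{\vsol}$) is what makes the equivalence exact.
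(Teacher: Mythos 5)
Your proposal is correct and follows essentially the same route as the paper: it rests on the explicit norm formula \eqref{eq:L2-norm-GWP}, $\normLtwo{\vsol}^2 = (\pi\scp)^{\dim/2}\abs{\det\fcQ}\exp(-\tfrac{2}{\scp}\ImPha)$, and then identifies equality of the two norms with \eqref{eq:relation_zeta_I_approx} by taking logarithms. The only difference is that you derive the norm formula from the Gaussian integral and $\IC=(\fcQ\fcQ^*)^{-1}$, whereas the paper simply quotes it; the equivalence argument itself is identical.
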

\begin{proof}
    Because of
    \begin{align}\label{eq:L2-norm-GWP}
        \normLtwo{\vsol[n]}^2
        = 
        \exp\bigl(-\tfrac{2}{\varepsilon}\zeta_I^n\bigr)(\varepsilon\pi)^\frac{d}{2}\abs{\det Q^n},
    \end{align}
    the norm preservation 
    is equivalent to
    \begin{equation}
        \exp\bigl(\tfrac{2}{\varepsilon}(\zeta_I^n-\zeta_I^{n+1})\bigr) 
        = 
        \frac{\abs{\det \fcQ^n}}{\abs{\det \fcQ^{n+1}}} .
    \end{equation}
    This proves the statement.
\end{proof}

Furthermore, for the integration of \eqref{eq:zeta_R}, we propose to use the midpoint rule with \timestep size \(2\tau\). This results in a two-step method. Since  \(\vel[n\pm\frac12]\) and \(\fcVel[n\pm\frac12]\) are already available, we apply averaging \eqref{eq:average-vel} instead of extrapolation to obtain approximations at $\tn[n]$. This results in
\begin{align} \label{eq:zeta-R-approx}
    \RePha[n+1] &= \RePha[n-1] + 2\tau \left( \frac12 (\vel[n])^2 + (\mean{\mgPot}^n)^\top\vel[n] -\mean{\mltPot}^n\right)\\  
    &\quad + \frac{\scp\tau}{2}\, \tr\!\left( \mean{\partial_x^2\cham}^n \fcQ[n](\fcQ[n])^* - 2(\fcQ[n]\fcQ[n]^*)^{-1} \right)  
 \end{align}

%

\subsection{Complete Boris-type algorithm}
Overall, combining the time discretization of center, width and phase of the Gaussian variational wave packet, we get the following algorithm for solving the equations of motion~\eqref{eq:euler-lagrange-q-v} and~\eqref{eq:Pha-eqs-all}. 
\begin{algo}[One \timestep with the Boris algorithm]\label{Algo:Boris}
    Input: Last steps \((\pos[n], \vel[n-\frac12], \fcQ[n], \fcVel[n-\frac12], \pha[n], \pha[n-1])\). \\[1ex]
    Output: Approximations \((\pos[n+1], \vel[n+\frac12], \fcQ[n+1], \fcVel[n+\frac12], \pha[n+1])\).
    \begin{itemize}
        \item compute \(\pos[n+1], \vel[n+\frac12]\) with the Boris algorithm~$\eqref{eq:Boris-Algo}$ applied to \eqref{eq:eul-lag-q-v}
        \item compute \(\fcQ[n+1], \fcVel[n+\frac12]\) with the Boris algorithm~$\eqref{eq:Boris-Algo}$ applied column-wise to ~\eqref{eq:eul-lag-Q-Y},
        
        \item compute \(\ImPha[n+1]\) and \(\RePha[n+1]\) from \eqref{eq:relation_zeta_I_approx} and \eqref{eq:zeta-R-approx}, respectively.
    \end{itemize}
\end{algo}

In general, one has to apply Gauss-Hermite quadrature rule to approximate the averages, see~\cite[Section~8]{LasL20} for details. Note that for strong magnetic fields, filtered variants of the Boris algorithms might be more efficient, cf.\ \cite{HaiLW20}.

\subsection{Modified classical Runge--Kutta method}\label{subsec:modifiedRK4}

As an alternative to the Boris-type algorithm, we propose a modification of the classical Runge--Kutta method (RK4) of order~4. The modification consists of updating the component \(\ImPha\) in each intermediate step by using \eqref{eq:relation_zeta_I_approx} to get the approximations to $\ImPha(t_{n+1/2})$ and $\ImPha(t_{n+1})$. All other components are updated by the standard RK4 procedure. 
In contrast to the original RK4 scheme, the modification automatically conserves the \(\Ltwo\)-norm of a \GWP.

\section{Numerical experiments}\label{sec:experiments}
In the following section, we present some numerical examples.

\subsection{Sublinear magnetic potential in two dimensions} \label{subsec:convg-plots}

If we consider the potentials
\begin{equation}\label{eq:sublinear-pot-expls}
    \mgPot(x, t) = \begin{pmatrix}
        \sin(x_1+x_2 + \alpha t) \\ -\sin(x_1+x_2 + \alpha t)
    \end{pmatrix}, \qquad
    \mltPot(x, t) = \sin(x_1+x_2)
\end{equation}
with $\alpha\in\{0,1\}$, we can calculate the occurring averages \(\mean{\mgPot}, \mean{\mltPot}\) analytically as
\begin{align}
    &\intRd \sin(x_1+x_2 + \alpha t)\abs{\vsol(x)}^2\dx \\
    &\qquad
    = 
    (\pi\scp)^{d/2}\mathrm{det}(L^{-1})
    \mathrm{exp}(-\tfrac{2}{\scp}\ImPha - \tfrac{\scp}{4}\mathds{1}^\top\fcQ\fcQ^*\mathds{1})
    \sin(\pos_1+\pos_2 + \alpha t)
\end{align}
where $\mathds{1}=(1 \ 1)^\top$ and \( (\fcQ\fcQ^*)^{-1} = LL^\top\) is the Cholesky decomposition. Since we compare the new time-integrators with the standard RK4 method, which is not norm-conserving, we do not assume normalization of $\vsol$. We use for the curl of a 2d vector potential \(\mgPot\) the convention \(\nabla\times\mgPot = \partial_1\mgPot_2 - \partial_2\mgPot_1\).
Initial values are chosen as
\begin{equation}\label{eq:sublinear-IV}
    \pos^0 
    = 
    \begin{pmatrix}
        0 \\ 0
    \end{pmatrix},
    \quad
    \mom^0
    = 
    \begin{pmatrix}
        1 \\ 0
    \end{pmatrix},
    \quad
    \fcQ^0
    =
    \begin{pmatrix}
        1 & 0 \\
        0 & 1
    \end{pmatrix},
    \quad
    \fcP^0
    =
    \begin{pmatrix}
        \mathrm{i} & 0 \\
        0 & \mathrm{i}
    \end{pmatrix},
    \quad
    \RePha[0] = 0. 
\end{equation}
The imaginary part of the phase \(\ImPha[0]\) is chosen such that the corresponding initial \GWP \(\vsol^0\) is normalized. Note that the normalization is \(\scp\) dependent and thus the initial value \(\ImPha[0]\) changes for different values of \(\scp\).

In \Cref{fig:sublinear-component-convg} we depict the component errors of a \GWP where we solve~\eqref{eq:euler-lagrange-q-v} with the Boris-type algorithm~\ref{Algo:Boris} (left) and the modified fourth order Runge--Kutta (mRK4) method (right). As end time we choose \(T=8\) and set \(\alpha = 1, \scp = 10^{-3}\). We compare
the numerical solution to a reference solution calculated by the standard RK4 method applied to~\eqref{eq:ham-all} and~\eqref{eq:ham-Q-P} with time step-size \(\tau = 10^{-4}\). As we see, the error decreases by order two for the Boris-type method and by order four for the mRK4 method with decreasing time step-size.
\begin{figure}[!htb]
    \centering
    \begin{tikzpicture}

    \definecolor{crimson2143940}{RGB}{214,39,40}
    \definecolor{darkgray176}{RGB}{176,176,176}
    \definecolor{darkorange25512714}{RGB}{255,127,14}
    \definecolor{forestgreen4416044}{RGB}{44,160,44}
    \definecolor{gray127}{RGB}{127,127,127}
    \definecolor{lightgray204}{RGB}{204,204,204}
    \definecolor{orchid227119194}{RGB}{227,119,194}
    \definecolor{sienna1408675}{RGB}{140,86,75}
    \definecolor{steelblue31119180}{RGB}{31,119,180}

    \begin{groupplot}[
        group style={group size=2 by 1, horizontal sep=0.5cm},
        width=7cm,
        height=5.5cm,
        enlarge x limits=false,
        legend columns=-1]
        
    \nextgroupplot[
        log basis x={10},
        log basis y={10},
        tick align=inside,
        tick pos=left,
        x grid style={darkgray176},
        y grid style={darkgray176},
        xmajorgrids,
        ymajorgrids,
        xminorticks=false,
        xlabel={$\tau$},
        ylabel={parameter errors},
        xmin=8e-4, xmax=0.04,
        ymin=3e-16, ymax=0.2,
        xmode=log, ymode=log,
        xtick style={color=black},
        ytick style={color=black},
        legend to name=sublinear-comp-group
    ]
    
    \addplot [semithick, steelblue31119180, dashed, mark=o, mark size=3, mark options={solid}]
    table {%
    0.032 0.0102388171065094
    0.016 0.00255358806458004
    0.008 0.000637971846711924
    0.004 0.000159460806293044
    0.002 3.98624890100995e-05
    0.001 9.96536470277718e-06
    };
    \addlegendentry{$q$}
    \addplot [semithick, crimson2143940, dashed, mark=o, mark size=3, mark options={solid}]
    table {%
    0.032 0.00422908675671902
    0.016 0.00105568579889727
    0.008 0.000263801280462916
    0.004 6.59396607836275e-05
    0.002 1.64840896169256e-05
    0.001 4.12092912785038e-06
    };
    \addlegendentry{$p$}
    \addplot [semithick, steelblue31119180, mark=+, mark size=3, mark options={solid}]
    table {%
    0.032 0.0350103971583852
    0.016 0.00871570180822293
    0.008 0.00217642674077701
    0.004 0.000543923644007725
    0.002 0.000135966061652004
    0.001 3.39901600916745e-05
    };
    \addlegendentry{$Q$}
    \addplot [semithick, crimson2143940, mark=+, mark size=3, mark options={solid}]
    table {%
    0.032 0.0500425186755286
    0.016 0.0125287560525743
    0.008 0.00313356927479516
    0.004 0.000783517205810596
    0.002 0.000195892043983455
    0.001 4.89744257444221e-05
    };
    \addlegendentry{$P$}
    \addplot [semithick, forestgreen4416044, mark=asterisk, mark size=3, mark options={solid}]
    table {%
    0.032 0.0121646373545197
    0.016 0.00303930260790786
    0.008 0.000759593893580934
    0.004 0.000189876384294685
    0.002 4.74669125036797e-05
    0.001 1.18664638799615e-05
    };
    \addlegendentry{$\zeta_R$}
    \addplot [semithick, sienna1408675, mark=asterisk, mark size=3, mark options={solid}]
    table {%
    0.032 8.22632712444695e-06
    0.016 2.0558362630557e-06
    0.008 5.13889773888845e-07
    0.004 1.28484922385513e-07
    0.002 3.21202519508283e-08
    0.001 8.03014573325794e-09
    };
    \addlegendentry{$\zeta_I$}

    \node at (1.7e-3,0.02) {Boris};
    
    \draw (0.016, 1.28e-6) -- (0.002, 2e-8) -- (0.016, 2e-8) -- cycle;
    \draw (0.014, 2e-6) coordinate[label=below:$2$];
    
    \nextgroupplot[
        log basis x={10},
        log basis y={10},
        scaled y ticks=manual:{}{\pgfmathparse{#1}},
        tick align=inside,
        tick pos=left,
        xmajorgrids,
        ymajorgrids,
        xminorticks=false,
        x grid style={darkgray176},
        y grid style={darkgray176},
        xlabel={$\tau$},
        xmin=8e-4, xmax=0.04,
        ymin=3e-16, ymax=0.2,
        xmode=log,
        ymode=log,
        xtick style={color=black},
        ytick style={color=black},
        yticklabel=\empty
    ]

    \addplot [semithick, steelblue31119180, dashed, mark=o, mark size=3, mark options={solid}]
    table {%
    0.032 2.14060514028276e-06
    0.016 1.54733236924675e-07
    0.008 1.03238865571962e-08
    0.004 6.65226469114236e-10
    0.002 4.1844724235266e-11
    0.001 2.54504572426307e-12
    };
    \addplot [semithick, crimson2143940, dashed, mark=o, mark size=3, mark options={solid}]
    table {%
    0.032 1.11361473778674e-06
    0.016 7.45553570672618e-08
    0.008 4.8795797589562e-09
    0.004 3.1160384655123e-10
    0.002 1.94949552205813e-11
    0.001 1.15144857006243e-12
    };
    \addplot [semithick, steelblue31119180, mark=+, mark size=3, mark options={solid}]
    table {%
    0.032 3.30742503902745e-06
    0.016 2.79285065622984e-07
    0.008 2.06118309711696e-08
    0.004 1.39040586693949e-09
    0.002 8.95885192072765e-11
    0.001 5.89488639574085e-12
    };
    \addplot [semithick, crimson2143940, mark=+, mark size=3, mark options={solid}]
    table {%
    0.032 1.59020046647456e-05
    0.016 1.06300020921442e-06
    0.008 6.85715253355482e-08
    0.004 4.34904961977116e-09
    0.002 2.71240690892684e-10
    0.001 1.59391222500026e-11
    };
    \addplot [semithick, forestgreen4416044, mark=asterisk, mark size=3, mark options={solid}]
    table {%
    0.032 2.26448379159194e-06
    0.016 1.45399416950909e-07
    0.008 9.20920850688844e-09
    0.004 5.79193581984327e-10
    0.002 3.61186636155253e-11
    0.001 2.159161738291e-12
    };
    \addplot [semithick, sienna1408675, mark=asterisk, mark size=3, mark options={solid}]
    table {%
    0.032 1.48319486900639e-09
    0.016 9.38142341588843e-11
    0.008 5.89148998683675e-12
    0.004 3.68945759360306e-13
    0.002 2.29399832463173e-14
    0.001 1.35872216255883e-15
    };

    \node at (1.7e-3,0.02) {mRK4};

    \draw (0.016, 4.096e-11) -- (0.002, 1e-14) -- (0.016, 1e-14) -- cycle;
    \draw (0.014, 2e-11) coordinate[label=below:$4$];
    
\end{groupplot}

\node (l1) at ($(group c1r1.south)!0.5!(group c2r1.south) + (0,-.25)$)
      [below, yshift=-2\pgfkeysvalueof{/pgfplots/every axis title shift}]
      {\ref*{sublinear-comp-group}}; 
    
\end{tikzpicture}
    \vspace{-4mm}
    \caption{
        Simulation of the motion of a particle in a magnetic field in dimension two with potentials \eqref{eq:sublinear-pot-expls} and initial values \eqref{eq:sublinear-IV}. 
        Errors of the numerical solution to~\eqref{eq:euler-lagrange-q-v} approximated by the 
        Boris-type algorithm (left) and the mRK4 method (right) measured in the Frobenius norm scaled with the inverse number of entries.
    }
    \label{fig:sublinear-component-convg}
\end{figure}

In \Cref{fig:sublinear-L2-convg-old} we illustrate the \(L^2\)-error of a \GWP with parameters calculated by the Boris-type method (left) and a \GWP with parameters calculated by the mRK4 method (right). Again we compute the error against a reference solution calculated with the standard RK4 method applied to~\eqref{eq:ham-all} and~\eqref{eq:ham-Q-P} with time stepsize \(\tau = 10^{-4}\). 
The \(L^2\)-norm between the two \GWPs is computed with a Gauss--Hermite quadrature rule. 
We see a reduction of order two for the Boris-type method and of order four for the mRK4 method in the \(L^2\)-norm. Moreover, we see that the error constant scales with as \(\scp^{-1}\) in both cases, which is supported by the theoretical result~\cite[Thm. 7.7]{LasL20}. Further note that the \(L^2\)-error between two normed \GWP is bounded by $2$, which explains the upper plateau in the left plot. The lower plateau in the right plot corresponds to the numerical computation of the underlying integral at almost machine precision.
\begin{figure}[!htb]
    \centering
    \begin{tikzpicture}

    \definecolor{crimson2143940}{RGB}{214,39,40}
    \definecolor{darkgray176}{RGB}{176,176,176}
    \definecolor{darkorange25512714}{RGB}{255,127,14}
    \definecolor{forestgreen4416044}{RGB}{44,160,44}
    \definecolor{gray127}{RGB}{127,127,127}
    \definecolor{lightgray204}{RGB}{204,204,204}
    \definecolor{orchid227119194}{RGB}{227,119,194}
    \definecolor{sienna1408675}{RGB}{140,86,75}
    \definecolor{steelblue31119180}{RGB}{31,119,180}

    \begin{groupplot}[
        group style={group size=2 by 1, horizontal sep=0.5cm},
        width=7cm,
        height=5.5cm,
        enlarge x limits=false,
        legend columns=-1
        ]
        
    \nextgroupplot[
        log basis x={10},
        log basis y={10},
        tick align=inside,
        tick pos=left,
        x grid style={darkgray176},
        y grid style={darkgray176},
        xmajorgrids,
        ymajorgrids,
        xminorticks=false,
        xlabel={$\tau$},
        ylabel={$L^2$-error},
        xmin=8e-4, xmax=0.04,
        ymin=8e-8, ymax=10,
        xmode=log, ymode=log,
        xtick style={color=black},
        ytick style={color=black},
        legend to name=sublinear-Ltwo-error
    ]
    
    \addplot [semithick, darkorange25512714, mark=o, mark size=3, mark options={solid}]
    table {%
    0.032 0.406699590483875
    0.016 0.10577868671153
    0.008 0.0264979162595601
    0.004 0.0066224315573868
    0.002 0.001655560675635
    0.001 0.000415543731248885
    };
    \addlegendentry{$\varepsilon=10^{-2}$}
    \addplot [semithick, forestgreen4416044, mark=+, mark size=3, mark options={solid}]
    table {%
    0.032 1.99870410250843
    0.016 0.941975979726712
    0.008 0.24536914047246
    0.004 0.061497243414085
    0.002 0.0153766564298069
    0.001 0.00384437878211207
    };
    \addlegendentry{$\varepsilon=10^{-3}$}
    \addplot [semithick, crimson2143940, mark=asterisk, mark size=3, mark options={solid}]
    table {%
    0.032 1.99809328819947
    0.016 1.99982073766145
    0.008 1.90477297986864
    0.004 0.620629733960408
    0.002 0.157600487729003
    0.001 0.0394383823353463
    };
    \addlegendentry{$\varepsilon=10^{-4}$}

    \node at (1.7e-3,1.5) {Boris};
    
    \draw (0.016, 6.4e-2) -- (0.002, 1e-3) -- (0.016, 1e-3) -- cycle;
    \draw (0.014, 4e-2) coordinate[label=below:$2$];
    
    \nextgroupplot[
        log basis x={10},
        log basis y={10},
        scaled y ticks=manual:{}{\pgfmathparse{#1}},
        tick align=inside,
        tick pos=left,
        xmajorgrids,
        ymajorgrids,
        xminorticks=false,
        x grid style={darkgray176},
        y grid style={darkgray176},
        xlabel={$\tau$},
        xmin=8e-4, xmax=0.04,
        ymin=8e-8, ymax=10,
        xmode=log,
        ymode=log,
        xtick style={color=black},
        ytick style={color=black},
        yticklabel=\empty
    ]

    \addplot [semithick, darkorange25512714, mark=o, mark size=3, mark options={solid}]
    table {%
    0.032 0.000145645429860006
    0.016 9.5790709736195e-06
    0.008 6.12218337779466e-07
    0.004 1.94857967356175e-07
    0.002 1.94857967356175e-07
    0.001 1.94857967356175e-07
    };
    \addplot [semithick, forestgreen4416044, mark=+, mark size=3, mark options={solid}]
    table {%
    0.032 0.00134995024341988
    0.016 8.44363461353324e-05
    0.008 5.27940801920202e-06
    0.004 3.64088765468318e-07
    0.002 2.06476546236143e-07
    0.001 2.09677923909877e-07
    };
    \addplot [semithick, crimson2143940, mark=asterisk, mark size=3, mark options={solid}]
    table {%
    0.032 0.0135694269136473
    0.016 0.000847741956502617
    0.008 5.29503196798235e-05
    0.004 3.31238434713736e-06
    0.002 2.6904747017127e-07
    0.001 2.17475287904239e-07
    };

    \node at (1.7e-3,1.5) {mRK4};

    \draw (0.032, 1.024e-4) -- (0.008, 4e-7) -- (0.032, 4e-7) -- cycle;
    \draw (0.028, 5e-5) coordinate[label=below:$4$];
    
\end{groupplot}

\node (l1) at ($(group c1r1.south)!0.5!(group c2r1.south) + (0,-.25)$)
      [below, yshift=-2\pgfkeysvalueof{/pgfplots/every axis title shift}]
      {\ref*{sublinear-Ltwo-error}}; 
    
\end{tikzpicture} 
    \vspace{-4mm}
    \caption{
        \(L^2\)-error of a \GWP 
        approximated by the Boris-type method~\eqref{Algo:Boris} (left) and the mRK4 method (right) against a reference \GWP with coefficients approximated by the classical RK4 method with time stepsize $\tau=10^{-4}$. The potentials are given by~\eqref{eq:sublinear-pot-expls} and the initial values by~\eqref{eq:sublinear-IV}. Different values for \(\scp\) are considered.
    }
    \label{fig:sublinear-L2-convg-old}
\end{figure}


In \Cref{fig:sublinear-energy-Boris-RK4} we compare the error between the energy~\eqref{eq:energy_general} and the initial energy (top) of a \GWP between the three methods, Boris-type, mRK4, and standard RK4. We have fixed \(\alpha = 0\) in~\eqref{eq:sublinear-pot-expls} such that we have time-independent potentials and therefore theoretical energy conservation. As end time we chose \(T = 200\) and \(\scp = 10^{-3}\). On the left, we plot the maximal energy error at each time stamp to the initial energy for different time step-sizes \(\tau\). We see a reduction of order two of the error for the Boris-type method and of order four for the mRK4 and standard RK4 methods. Note, however, that the error constant of the standard RK4 method is worse than that of the mRK4 method and the Boris-type method. In the right plot, we plot the energy error against the initial error at each time stamp for a fixed time step-size \(\tau=10^{-1}\). As we see, the error of the Boris-type method oscillates at the same level, while we see for the standard RK4 and mRK4 methods a drift. Note, however, that the drift of the mRK4 method is way smaller than that of the standard RK4 method. Moreover, at the bottom of \Cref{fig:sublinear-energy-Boris-RK4} we illustrate the error of the \(L^2\) norm of a \GWP to norm conservation using time stepsize \(\tau = 10^{-1}\). The error for the Boris-type method and the mRK4 method is close to the machine precision and hence shows conservation of the \(L^2\)-norm, whereby using the standard RK4 method results in a deviation. The \(L^2\)-norm of a \GWP given its parameters can be calculated analytically by~\eqref{eq:L2-norm-GWP}.
\begin{figure}[!htb]
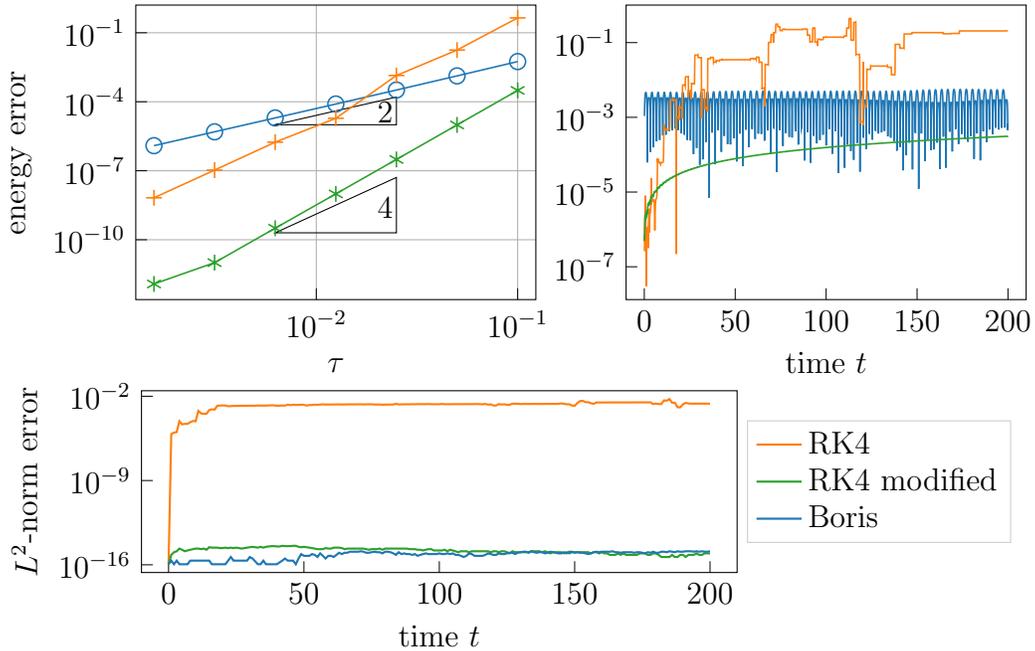

    \centering
    \input{tikz/sublinear/energy-group} \\
    \vspace{-5mm}
    \input{tikz/sublinear/L2-norm-boris-rk4-modified}
    \vspace{-4mm}
    \caption{
        Energy error against the initial energy (top) and $L^2$-norm error (bottom) using the potentials and initial values in Section~\ref{subsec:convg-plots}. The endtime is chosen as \(T=200\) and \(\scp=10^{-3}\). On the left, the maximal energy error is illustrated over all time stamps for different time stepsizes \(\tau\). On the right, the energy error over the time interval \([0, 200]\) is plotted for a fixed time stepsize \(\tau = 10^{-1}\).
    }
    \label{fig:sublinear-energy-Boris-RK4}
\end{figure}

%

\subsection{Penning trap}\label{subsec:Penning-trap}

As a second example, we apply the time-integrators to 
the three--dimensional quantum dynamics of a proton in a hyperbolic Penning trap, see \Cref{subsec:intro-Penning-trap}. 
We consider the Schr\"odinger equation \eqref{eq:TDSE-no-units} with the trap parameters of \Cref{tab:trap-frequencies}
and an initial \GWP with parameters (in dimensionless units)
\begin{equation}\label{eq:penning-IV}
\begin{split}
    {\pos}^0 &= \begin{pmatrix}
        0.133 & 0.133 & 0.258
    \end{pmatrix}^\top, 
    \quad
    {\fcQ}^0 = \mathrm{diag}({\pos}^0), \\
    {\mom}^0 &= \begin{pmatrix}
        0.133 & 7.492 & 3.879
    \end{pmatrix}^\top, 
    \quad
    {\fcP}^0 = \ii\cdot\mathrm{diag}(7.492, 7.492, 3.879), \\
    %
    {\pha}^0 &= 1.009 - 1.84\cdot 10^{-7}\ii .
\end{split}
\end{equation}
 The initial condition is chosen such that the dynamics are coherent in the sense, that the width of the packet does not change over time. The phase parameter has a non-vanishing real part to be aligned with the analytic expressions for the center motion  
that were recently given in \cite[eqs.~(12) and (17)]{BiaB24} (with $\omega_\bot = \Omega/2$).
As previously mentioned, see for example \Cref{sec:accuracy}, the variational approximation is exact in this case, and the evolution does not require averages but only point evaluations, see \Cref{rem:linear}. In this set-up, we see the exact errors of our numerical schemes to integrate the equations of motion~\eqref{eq:euler-lagrange-q-v} and \eqref{eq:Pha-eqs-all}. 

In \Cref{fig:penning-trap-projected-exact}, we showed the exact trajectory of the position center. 
Virtually the same trajectory is obtained by our numerical simulations for $\tau \lesssim 10^{-3}$, as confirmed by the small errors illustrated in \Cref{fig:penning-component-convg}.
%
There, we present the maximal error over all time steps of the parameters against the exact solution for different step-sizes \(\tau\) in the Frobenius norm scaled by the inverse number of the component. As expected, the Boris-type method converges with order two while the RK4 and mRK4 methods converge with order four. 
We are convinced that the plateau at larger time step-sizes occurs since the parameter evolution is mildly oscillatory because the quotients ${\corrCycloFreq}/{\magnetFreq} \approx 113.25$ and ${B}/{B_m} \approx 114.25$
of the potentials in~\eqref{eq:TDSE-no-units} are not small, and since we did not observe such plateaus when setting these quotients close to one. 
Moreover, the error for the imaginary phase \(\ImPha\) using the RK4 method applied to~\eqref{eq:ham-all} is close to the machine precision since the exact solution is constant. In contrast, in the mRK4 method, 
the error of \(\ImPha\) directly relates to the error of \(\fcQ\) by~\eqref{eq:relation_zeta_I_approx} and thus shows order four. 
Finally, in \Cref{fig:penning-energy-Boris-RK4} we compare the energy error of the three methods, which shows a drift for the RK4 methods but not for the Boris-type algorithm.


\begin{figure}[!htb]
    \centering
    \input{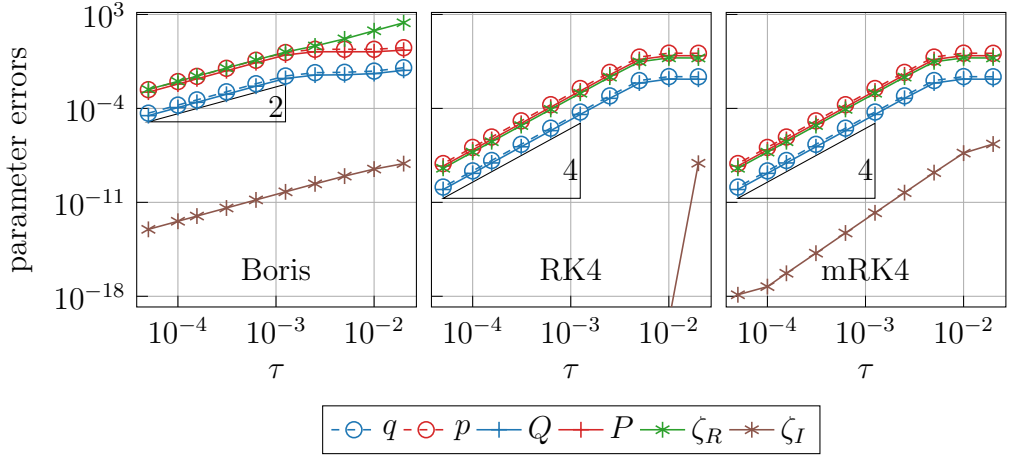}
    \vspace{-4mm}
    \caption{
        Maximum errors of the parameters approximated by the Boris-type method~\eqref{Algo:Boris} (left), the RK4 method (middle) and the mRK4 method (right) against the exact solution using the potentials~\eqref{eq:trap-potentials} with data given in~\Cref{tab:trap-frequencies} and initial values~\eqref{eq:penning-IV}. Measured in the Frobenius norm scaled with the inverse number of components.
    }
    \label{fig:penning-component-convg}
\end{figure}

\begin{figure}[!htb]
    \centering
    \input{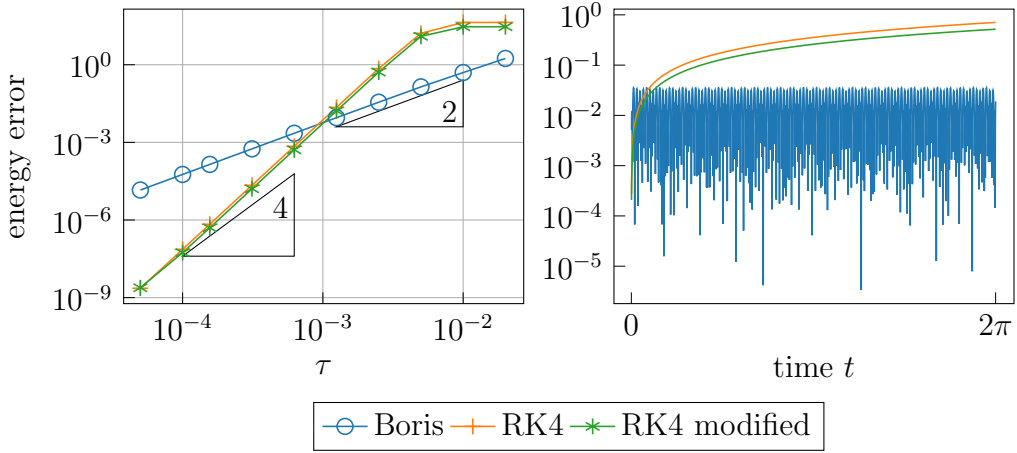}
    \vspace{-8mm}
    \caption{
        Energy error  using the potentials~\eqref{eq:trap-potentials} and initial values~\eqref{eq:penning-IV}. The endtime is chosen as \(T=2\pi\). On the left, the maximal energy error is plotted for different time stepsizes \(\tau\). On the right, 
        we plotted the energy error over time for \(\tau = 2.5\cdot10^{-3}\).
    }
    \label{fig:penning-energy-Boris-RK4}
\end{figure}








\section*{Acknowledgments}
The collaboration among the authors was inspired by discussions during the Workshop ``Nonlinear Optics: Physics, Analysis, and Numerics'' at Mathematisches Forschungsinstitut Oberwolfach. We are thankful to the institute for creating such a pleasant and stimulating atmosphere.
We thank Selina Burkhard for co-supervising the Master thesis of Malik Scheifinger, which was the origin of this project. 
Funded by the Deutsche Forschungsgemeinschaft (DFG, German Research Foundation) -- Project-ID 258734477 -- SFB 1173 and 
-- Project-ID 470903074 -- TRR 352.

\appendix
\section{Appendix: Gaussian calculus and energy formula}\label{appendix}

\begin{lemma}\label{lem:Gauss_calc} Consider smooth functions \(\somePot\colon\Rd\to\bbC^{\dim\times \dim}\) and $w\colon\Rd\to\bbC$, and an arbitrary matrix $\someMatrix\in\bbC^{d\times d}$. 
Let $\vsol\in L^2(\bbR^\dim)$ be a Gaussian wave packet with position $\pos$ and width $\wm = \ReC + \ii\ImC$. Then, 
    \begin{align}
        &\mean{\somePot(x-\pos)} = \frac\scp2\,  \sum_{\ell=1}^\dim \left((\mean{\partial_\ell\somePot}
        \ImCinv)_{k\ell}
        \right)_{k=1}^\dim,\\
        &\mean{ (x-\pos)^\top \somePots\someMatrix (x-\pos) } = \frac\scp2 \mean{\somePots}  \tr(\someMatrix \ImCinv)
        + \frac{\scp^2}4 \tr(\mean{\nabla^2 \somePots} \ImCinv \someMatrix \ImCinv ).
    \end{align}
    In particular, 
    $
    \mean{\somePots(x-\pos)} = \frac\scp2 \ImCinv\mean{\nabla\somePots}
    $.
\end{lemma}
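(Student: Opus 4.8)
The plan is to reduce all three identities to a single Gaussian integration-by-parts rule for the position density $\rho(x)\da|\vsol(x)|^2$. From the Gaussian form \eqref{eq:gwp} one has $\rho(x)=\exp\!\bigl(-\tfrac1\scp(x-\pos)^\top\ImC(x-\pos)-\tfrac2\scp\ImPha\bigr)$, so differentiating gives $\nabla_x\rho(x)=-\tfrac2\scp\,\ImC(x-\pos)\rho(x)$, that is,
\begin{equation*}
(x-\pos)\,\rho(x)=-\tfrac\scp2\,\ImCinv\,\nabla_x\rho(x),
\end{equation*}
where we use that $\ImC=\im\wm$ is real symmetric and positive definite, hence invertible with symmetric inverse. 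Positive definiteness also makes $\rho$ decay like a Gaussian, so $\rho$ multiplied by any polynomial and by any polynomially bounded derivative of the smooth functions $\somePots,\somePot$ is integrable and every boundary term below vanishes.

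First I would prove the scalar first-order identity. For smooth $\somePots\colon\Rd\to\bbC$, reading off the $j$-th component of the displayed relation and integrating by parts once,
\begin{equation*}
\mean{\somePots\,(x_j-\pos_j)}=-\tfrac\scp2\sum_{\ell}(\ImCinv)_{j\ell}\int \somePots\,\partial_\ell\rho\,\dx=\tfrac\scp2\sum_\ell(\ImCinv)_{j\ell}\,\mean{\partial_\ell\somePots},
\end{equation*}
which is exactly $\mean{\somePots(x-\pos)}=\tfrac\scp2\ImCinv\mean{\nabla\somePots}$ (the ``In particular'' claim). The matrix statement then follows entry by entry: since $(\somePot(x-\pos))_k=\sum_j\somePot_{kj}(x_j-\pos_j)$, applying the scalar identity to each $\somePots=\somePot_{kj}$ and summing over $j$ gives $\mean{(\somePot(x-\pos))_k}=\tfrac\scp2\sum_{j,\ell}\mean{\partial_\ell\somePot_{kj}}(\ImCinv)_{j\ell}=\tfrac\scp2\sum_\ell(\mean{\partial_\ell\somePot}\ImCinv)_{k\ell}$, as claimed.

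For the quadratic identity I would expand $\mean{(x-\pos)^\top\somePots\someMatrix(x-\pos)}=\sum_{k,\ell}\someMatrix_{k\ell}\mean{\somePots(x_k-\pos_k)(x_\ell-\pos_\ell)}$, apply the displayed relation to the factor $(x_\ell-\pos_\ell)\rho$, and integrate by parts in $\partial_m$, obtaining
\begin{equation*}
\mean{\somePots(x_k-\pos_k)(x_\ell-\pos_\ell)}=\tfrac\scp2\sum_m(\ImCinv)_{\ell m}\Bigl(\mean{(\partial_m\somePots)(x_k-\pos_k)}+\delta_{mk}\mean{\somePots}\Bigr).
\end{equation*}
The $\delta_{mk}$-term contributes $\tfrac\scp2\mean{\somePots}\sum_{k,\ell}\someMatrix_{k\ell}(\ImCinv)_{\ell k}=\tfrac\scp2\mean{\somePots}\tr(\someMatrix\ImCinv)$. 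Applying the scalar first-order identity once more to $\mean{(\partial_m\somePots)(x_k-\pos_k)}$ turns the remaining term into $\tfrac{\scp^2}4\sum_{m,n}(\mean{\nabla^2\somePots})_{nm}(\ImCinv\someMatrix\ImCinv)_{nm}$, using again symmetry of $\ImCinv$; since the Hessian $\mean{\nabla^2\somePots}$ is symmetric, this double sum equals $\tfrac{\scp^2}4\tr(\mean{\nabla^2\somePots}\,\ImCinv\someMatrix\ImCinv)$, completing the proof.

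The only genuinely delicate point is the justification of the integrations by parts, i.e. that $\somePots,\somePot$ and their first two derivatives grow slowly enough against the Gaussian decay of $\rho$ that all integrals converge and no boundary terms remain; this is automatic once $\im\wm=\ImC$ is positive definite, which is built into the definition of $\Mf$. Everything else is index bookkeeping, with the symmetry of $\ImCinv$ and of $\mean{\nabla^2\somePots}$ doing the work of collapsing the double sums into traces.
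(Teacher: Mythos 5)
Your proof is correct and follows essentially the same route as the paper: both rest on the identity $\nabla|\vsol|^2=-\tfrac{2}{\scp}\ImC(x-\pos)|\vsol|^2$ and then perform one (resp.\ two) Gaussian integrations by parts to obtain the first-order and second-order moment formulas, with the trace expressions emerging from the same index bookkeeping. The only differences are organizational (you prove the scalar identity first and deduce the matrix case entrywise, and you make the justification of vanishing boundary terms explicit), which does not change the substance of the argument.
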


\begin{proof}
The position density 
\[
\abs{\vsol(t,x)}^2 =  \exp\!\left(-\frac1\scp (x-\pos)^\top \ImC (x-\pos) - \frac2\scp \ImPha\right)
\]
satisfies
$
        \nabla\abs{\vsol(t,x)}^2 = -\frac{2}{\scp}\ImC(x-\pos)\abs{\vsol(t,x)}^2.
$
Therefore, partial integration yields that
\begin{align}
\mean{\somePot (x-\pos)} &= -\frac\scp2 \int_{\bbR^\dim} \somePot(x)\, \ImCinv \nabla\abs{\vsol(t,x)}^2 \dx\\
&= \frac\scp2  \sum_{\ell=1}^\dim \left( \mean{ 
 \partial_\ell(\somePot \ImCinv)_{k,\ell}}  \right)_{k=1}^\dim.
 \end{align}
Similarly, two partial integrations imply that 
\begin{align}
    &\mean{\somePots(x-\pos)^\top \someMatrix (x-\pos)} 
    =
    \sum_{k,\ell=1}^\dim \mean[\Big]{ \somePots (\ImC(x-\pos))_k (\ImCinv\someMatrix\ImCinv)_{k\ell} (\ImC(x-\pos))_\ell}\\ 
    &\qquad=
    \frac\scp2 \sum_{k,\ell=1}^\dim \left( 
    \mean{\somePots} (\ImC)_{k\ell} + \mean[\big]{\partial_\ell \somePots (\ImC(x-\pos))_k} \right) (\ImCinv\someMatrix\ImCinv)_{k\ell}\\
    &\qquad=
    \sum_{k,\ell=1}^\dim \left( 
    \frac\scp2 \mean{\somePots} (\ImC)_{k\ell} + \frac{\scp^2}{4} \mean[\big]{\partial_k \partial_\ell \somePots} \right) (\ImCinv\someMatrix\ImCinv)_{k\ell}\\   
    &\qquad=
    \frac\scp2 \mean{\somePots} \,\tr(\someMatrix\ImCinv) 
    + \frac{\scp^2}{4} \tr(\mean{\nabla^2 \somePots}\ImCinv\someMatrix\ImCinv) .
\end{align}
\end{proof}

\begin{lemma}\label{Lemma:energy_equation}
    Let $\vsol$ be a normalized \GWP with center $(\pos,\mom)$ and width $\wm = \ReC+\ii\ImC$. Then it holds for the energy
    \begin{align}
        \mean{H}
        &= 
        \frac12\mom^2 - \mom^\top\mean{\mgPot} 
        + \frac12\mean{\mgPot^2} 
        + \mean{\mltPot} + \frac\scp4 R_H
    \label{eq:energy_general}
    \end{align}
    with remainder
    \[
       R_H = \tr\left(\left(\ReC^2 + \ImC^2 - 2\mean{\jacobian{\mgPot}^\top} \ReC \right)\ImCinv \right). 
    \]
    If the magnetic potential \(\mgPot\) is linear in \(x\), then $\mean{\mgPot} = \mgPot(\pos)$ and 
    \begin{align}
        \mean{\mgPot^2} &= \mgPot(\pos)^2 + \frac\scp2\tr\left(\jacobian{\mgPot}(\pos)^\top\jacobian{\mgPot}(\pos)\ImCinv\right)
        \label{eq:energy_linear}
    \end{align}
\end{lemma}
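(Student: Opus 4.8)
The plan is to expand the averaged Hamiltonian $\mean{H} = \frac12\mean{(\op(\xi)-\mgPot)^2} + \mean{\mltPot}$ directly, using the Weyl quantization of the classical symbol together with the Gaussian calculus of \Cref{lem:Gauss_calc}. First I would recall that for a \GWP with parameters $(\pos,\mom,\wm)$ one has the key pointwise identity $-\ii\scp\nabla\vsol(x) = (\wm(x-\pos)+\mom)\vsol(x)$, so that $\op(\xi)$ acting on $\vsol$ produces the factor $\wm(x-\pos)+\mom$. Hence
\[
\mean{(\op(\xi)-\mgPot)^2} = \mean{\bigl(\wm(x-\pos)+\mom-\mgPot\bigr)^2} + (\text{Weyl correction terms}),
\]
where the correction terms arise from the noncommutativity of $\op(\xi)$ with multiplication by $\mgPot(x)$; these are $\O(\scp)$ and feed into the remainder $R_H$. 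I would write $\mom-\mgPot$ and separate the expansion into the purely quadratic-in-$(x-\pos)$ piece $\mean{(x-\pos)^\top\wm^2(x-\pos)}$-type terms, the linear-in-$(x-\pos)$ cross terms (which vanish by symmetry of the Gaussian density, since $\mean{(x-\pos)_k}=0$), and the constant piece $\mom^2 - 2\mom^\top\mean{\mgPot}+\mean{\mgPot^2}$.

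The main computational steps are then: (i) apply the second identity of \Cref{lem:Gauss_calc} with $\somePots\equiv 1$ and $\someMatrix = \wm^2 = (\ReC+\ii\ImC)^2$ to evaluate $\mean{(x-\pos)^\top\wm^2(x-\pos)} = \frac\scp2\tr(\wm^2\ICinv)$; (ii) handle the cross term $\wm(x-\pos)\cdot(\mom-\mgPot)$ — the $\mom$ part vanishes by symmetry, while the $-\mgPot$ part is a term of the form $\mean{\somePot(x-\pos)}$ with $\somePot = \wm^\top\nabla(\cdots)$, to which the first identity of \Cref{lem:Gauss_calc} applies and which produces, after using $\Div\mgPot = 0$, a trace involving $\mean{\jacobian{\mgPot}^\top}\ReC$; (iii) identify the symbolic Weyl correction $\op(\xi\cdot\mgPot)$ versus $\op(\xi)\cdot\mgPot(x)$, exactly as is done for $\op(\jacobian{\mgPot}^\top\xi)$ in the proof of \Cref{Lemma:euler-lagrange-q-v}, and check that for divergence-free $\mgPot$ this correction vanishes so no extra $\O(\scp)$ term survives. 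Collecting: the real part of $\tr(\wm^2\ICinv)$ is $\tr((\ReC^2-\ImC^2)\ICinv)$, but one must be careful — I expect the precise bookkeeping of real versus imaginary parts to be the delicate point, and the stated remainder $R_H = \tr((\ReC^2+\ImC^2-2\mean{\jacobian{\mgPot}^\top}\ReC)\ICinv)$ with a $+\ImC^2$ suggests that an extra contribution of $\frac\scp2\tr(\ImC^2\ICinv) = \frac\scp2\tr(\ImC)$ enters, presumably from the kinetic-energy correction term $\tfrac12\op((-\ii\scp\nabla)^2)$ reassembled carefully; I would double-check this against the known zero-potential semiclassical energy formula $\mean{\tfrac12(-\ii\scp\nabla)^2} = \tfrac12\mom^2 + \tfrac\scp4\tr(\wm\wm^*\ICinv)$ and note $\wm\wm^*\ICinv = (\ReC^2+\ImC^2)\ICinv$ after using $\ICinv = (\fcQ\fcQ^*)$ and symmetry — this is the consistency anchor for the $\ReC^2+\ImC^2$ combination.

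For the linear case, $\mgPot(x) = \mgPot(\pos) + \jacobian{\mgPot}(x-\pos)$ with constant Jacobian, so $\mean{\mgPot} = \mgPot(\pos) + \jacobian{\mgPot}\mean{x-\pos} = \mgPot(\pos)$ immediately, and
\[
\mean{\mgPot^2} = \mgPot(\pos)^2 + 2\mgPot(\pos)^\top\jacobian{\mgPot}\mean{x-\pos} + \mean{(x-\pos)^\top\jacobian{\mgPot}^\top\jacobian{\mgPot}(x-\pos)},
\]
the middle term vanishing and the last given by \Cref{lem:Gauss_calc} (second identity, $\somePots\equiv 1$, $\someMatrix = \jacobian{\mgPot}^\top\jacobian{\mgPot}$) as $\frac\scp2\tr(\jacobian{\mgPot}^\top\jacobian{\mgPot}\ICinv)$, which is exactly~\eqref{eq:energy_linear}.

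The hard part will be the sign- and real/imaginary-part bookkeeping in step (iii) and in assembling $R_H$: one has to keep straight which pieces of $\tr(\wm^2\ICinv)$, $\tr(\wm\wm^*\ICinv)$ and the magnetic cross term $\tr(\mean{\jacobian{\mgPot}^\top}\ReC\ICinv)$ combine, and to confirm via the $\mgPot=0$ limit and via $\Div\mgPot=0$ (which kills the symbolic Weyl corrections) that the final remainder is precisely the stated $\tr\bigl((\ReC^2+\ImC^2-2\mean{\jacobian{\mgPot}^\top}\ReC)\ICinv\bigr)$. Everything else is a routine application of the two identities of \Cref{lem:Gauss_calc} together with the Gaussian's symmetry.
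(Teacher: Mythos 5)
Your proposal is correct in substance and reaches the stated formula, but it diverges from the paper at the one step that matters. The paper never expands $(\op(\xi)-\mgPot)^2$ as a multiplication operator plus corrections: it uses self-adjointness to write $\mean{(\xi-\mgPot)^2}=\langle(\xi-\mgPot)\vsol,(\xi-\mgPot)\vsol\rangle$ and substitutes $-\ii\scp\nabla\vsol=(\wm(x-\pos)+\mom)\vsol$ in \emph{both} slots, so the quadratic term appears sesquilinearly as $(x-\pos)^\top\wm^*\wm(x-\pos)$; since $\tr(\wm^*\wm\ImCinv)=\tr((\ReC^2+\ImC^2)\ImCinv)$, the combination $\ReC^2+\ImC^2$ drops out immediately and there is no $\O(\scp)$ correction to track at all. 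Your route, writing $\mean{(\op(\xi)-\mgPot)^2}=\mean{(\wm(x-\pos)+\mom-\mgPot)^2}+\text{corrections}$, also closes, but the delicate point you flag is then unavoidable and can be pinned down explicitly: applying $-\ii\scp\nabla-\mgPot$ a second time produces the extra term $-\ii\scp\,\mean{\nabla\cdot(\wm(x-\pos)-\mgPot)}=-\ii\scp\,\tr(\wm)$ (the $\mgPot$-part vanishes by $\Div\mgPot=0$, as you anticipate); its real part $\scp\,\tr(\ImC)=\scp\,\tr(\ImC^2\ImCinv)$ is exactly what converts the $-\ImC^2$ coming from $\tr(\wm^2\ImCinv)$ into the stated $+\ImC^2$, while the imaginary parts $\pm\ii\scp\,\tr(\ReC)$ cancel — after the overall factor $\tfrac12$ this is precisely the $\tfrac\scp2\tr(\ImC)$ you guessed at, and it originates from differentiating the factor $\wm(x-\pos)$, not from any $\xi$--$\mgPot$ noncommutativity (which, as you say, contributes nothing for divergence-free $\mgPot$), so the appeal to the known $\mgPot=0$ formula is only a sanity check, not a needed ingredient. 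One small slip in your outline: the claim that the linear-in-$(x-\pos)$ cross terms vanish by symmetry holds only for the $\mom$-part; the $-\mgPot^\top\wm(x-\pos)$ part survives and yields the $-2\mean{\jacobian{\mgPot}^\top}\ReC$ piece of $R_H$, which your step (ii) then treats correctly via \Cref{lem:Gauss_calc}, matching the paper's $-\tfrac\scp2\tr\bigl(\mean{\jacobian{\mgPot}^\top}(\wm+\wm^*)\ImCinv\bigr)$. Your treatment of the linear case coincides with the paper's. In short: the paper's sesquilinear shortcut buys a correction-free computation, while your bilinear expansion is equivalent but needs the divergence-correction bookkeeping above to be spelled out.
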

\begin{proof}
    In the following we ignore the dependence on $t$ to simplify the notation. Since $\mean{H}  = \mean{h} = 
    \frac12\mean{(\xi-\mgPot)^2} + \mean{\mltPot}$, we only need to work on the mean of $\op((\xi-\mgPot)^2) = (\op(\xi-\mgPot))^2$. We have
    \begin{align}
        &\mean{(\xi-\mgPot)^2} = \langle (\xi-\mgPot)\vsol,(\xi-\mgPot)\vsol\rangle \\
        &= \langle (\wm(x-\pos) + (\mom-\mgPot))\vsol, (\wm(x-\pos) + (\mom-\mgPot))\vsol\rangle\\
        &= \mean{(x-\pos)^\top \wm^*\wm (x-\pos)} + \mean{(\mom-\mgPot)^\top (\wm + \wm^*) (x-\pos)} + \mean{(\mom-\mgPot)^2}\\
        &= \frac\scp2 \tr(\wm^*\wm\ImCinv) - \frac\scp2 \tr(\mean{\jacobian{\mgPot}^\top} (\wm+\wm^*)\ImCinv) + \mean{(\mom-\mgPot)^2}
    \end{align}
    due to \Cref{lem:Gauss_calc}.
    For the traces, we have 
    \begin{align}
        &\tr(\wm^*\wm\ImCinv) = \tr((\ReC-\ii\ImC)(\ReC+\ii\ImC)\ImCinv) = \tr((\ReC^2+\ImC^2)\ImCinv)\\
    &\tr(\mean{\jacobian{\mgPot}^\top}(\wm+\wm^*)\ImCinv) = 2\,\tr(\mean{\jacobian{\mgPot}^\top}\ReC\ImCinv).
    \end{align}
    Combining the terms, we obtain \eqref{eq:energy_general}. In the linear case, we expand  $\mgPot(x) = \mgPot(\pos) + \jacobian{\mgPot}(\pos)^\top(x-\pos)$ and use \Cref{lem:Gauss_calc} to prove \eqref{eq:energy_linear}.
\end{proof}

\small 
\bibliographystyle{elsarticle-num}
\bibliography{refs}

\end{document}